\newtheorem{lemma}{Lemma}
\newtheorem{definition}{Definition}
\newtheorem{proposition}{Proposition}
\newtheorem{theorem}{Theorem}
\newtheorem{assumption}{Assumption}
\newcommand{\bc}{\begin{center}}
\newcommand{\ec}{\end{center}}
\newcommand{\ba}{\begin{array}}
\newcommand{\ea}{\end{array}}
\newcommand{\be}{\begin{eqnarray}}
\newcommand{\ee}{\end{eqnarray}}
\newcommand{\bel}{\begin{eqnarray}\label}
\newcommand{\eel}{\end{eqnarray}}
\newcommand{\bes}{\begin{eqnarray*}}
\newcommand{\ees}{\end{eqnarray*}}
\newcommand{\bn}{\begin{enumerate}}
\newcommand{\en}{\end{enumerate}}
\newcommand{\rP}{{\mathbb{P}}}
\newcommand{\DPP}{\mathsf{DPP}}
\newcommand{\cD}{\mathcal{D}}
\newcommand{\cC}{\mathcal{C}}
\newcommand{\cI}{\mathcal{I}}
\newcommand{\etal}{{\em et al. }}
\newcommand{\sgn}{\operatorname{sgn}}
\newcommand{\R}{{\rm I}\kern-0.18em{\rm R}}
\newcommand{\h}{{\rm I}\kern-0.18em{\rm H}}
\newcommand{\K}{{\rm I}\kern-0.18em{\rm K}}
\newcommand{\p}{{\rm I}\kern-0.18em{\rm P}}
\newcommand{\E}{{\rm I}\kern-0.18em{\rm E}}
\newcommand{\1}{{\rm 1}\kern-0.24em{\rm I}}
\newcommand{\N}{{\rm I}\kern-0.18em{\rm N}}
\definecolor{MIT}{RGB}{163,31,52}
\begin{document}

\begin{frontmatter}

\title{Learning Determinantal Point Processes with Moments and Cycles}
\runtitle{Learning DPPs with Moments and Cycles}

\begin{aug}

\author{\fnms{John}~\snm{Urschel}\ead[label=urschel]{urschel@mit.edu}},
\author{\fnms{Victor-Emmanuel}~\snm{Brunel}\ead[label=veb]{veb@mit.edu}},
\author{\fnms{Ankur}~\snm{Moitra}\thanksref{t3}\ead[label=moitra]{moitra@mit.edu}}
\and
\author{\fnms{Philippe}~\snm{Rigollet}\thanksref{t2}\ead[label=rigollet]{rigollet@math.mit.edu}},

\affiliation{Massachusetts Institute of Technology}

\thankstext{t2}{This work was supported in part by NSF CAREER DMS-1541099, NSF DMS-1541100, DARPA W911NF-16-1-0551, ONR N00014-17-1-2147 and a grant from the MIT NEC Corporation.}
\thankstext{t3}{This work was supported in part by NSF CAREER Award CCF-1453261, NSF Large CCF-1565235, a David and Lucile Packard Fellowship, an Alfred P. Sloan Fellowship, an Edmund F. Kelley Research Award, a Google Research Award and a grant from the MIT NEC Corporation.}

%
%

\address{{John Urschel}\\
{Department of Mathematics} \\
{Massachusetts Institute of Technology}\\
{77 Massachusetts Avenue,}\\
{Cambridge, MA 02139-4307, USA}\\
\printead{urschel}
}

\address{{Victor-Emmanuel Brunel}\\
{Department of Mathematics} \\
{Massachusetts Institute of Technology}\\
{77 Massachusetts Avenue,}\\
{Cambridge, MA 02139-4307, USA}\\
\printead{veb}
}

\address{{Ankur Moitra}\\
{Department of Mathematics} \\
{Massachusetts Institute of Technology}\\
{77 Massachusetts Avenue,}\\
{Cambridge, MA 02139-4307, USA}\\
\printead{moitra}
}

\address{{Philippe Rigollet}\\
{Department of Mathematics} \\
{Massachusetts Institute of Technology}\\
{77 Massachusetts Avenue,}\\
{Cambridge, MA 02139-4307, USA}\\
\printead{rigollet}
}

\runauthor{Urschel et al.}
\end{aug}

\begin{abstract}
Determinantal Point Processes (DPPs) are a family of probabilistic models that have a repulsive behavior, and lend themselves naturally to many tasks in machine learning where returning a diverse set of objects is important. While there are fast algorithms for sampling, marginalization and conditioning, much less is known about learning the parameters of a DPP. Our contribution is twofold: (i) we establish the optimal sample complexity achievable in this problem and show that it is governed by a natural parameter, which we call the \emph{cycle sparsity}; (ii) we propose a provably fast combinatorial algorithm that implements the method of moments efficiently and achieves optimal sample complexity. Finally, we give experimental results that confirm our theoretical findings. 
\end{abstract}

\begin{keyword}[class=AMS]
\kwd[Primary ]{62M30}
\kwd[; secondary ]{60G55, 62C20, 05C38}
\end{keyword}
\begin{keyword}[class=KWD]
Determinantal point processes, minimax estimation, method of moments, cycle basis, Horton's algorithm 
\end{keyword}

\end{frontmatter}

\section{Introduction}
\label{SEC:intro}

Determinantal Point Processes (DPPs) are a family of probabilistic models that arose from the study of quantum mechanics \cite{Mac75} and random matrix theory \cite{Dys62}. Following the seminal work of Kulesza and Taskar \cite{KulTas12}, discrete DPPs have found numerous applications in machine learning, including in document and timeline summarization \cite{LinBil12, YaoFanZha16}, image search \cite{KulTas11,AffFoxAda14} and segmentation \cite{LeeChaYan16}, audio signal processing \cite{XuOu16}, bioinformatics \cite{BatQuoKul14} and neuroscience \cite{SnoZemAda13}. What makes such models appealing is that they exhibit repulsive behavior and lend themselves naturally to tasks where returning a diverse set of objects is important. 

One way to define a DPP is through an $N \times N$ symmetric positive semidefinite matrix $K$, called a \emph{kernel}, whose eigenvalues are bounded in the range $[0, 1]$.  Then the DPP associated with $K$, which we denote by $\DPP(K)$, is the distribution on $Y \subseteq [N]$ that satisfies, for any $J \subseteq [N]$,
$$ \mathbb{P}[J \subseteq Y] = \det(K_J),$$ 
where $K_J$ is the principal submatrix of $K$ indexed by the set $J$. The \emph{graph  induced by $K$} is the graph $G=([N], E)$ on the vertex set $[N]=\{1, \ldots, N\}$ that connects $i,j \in [N]$ if and only if $K_{i,j} \neq 0$.

There are fast algorithms for sampling (or approximately sampling) from $\DPP(K)$ \cite{deshpander, rebeschini2015fast, li2016fast, LiJegSra16}. 
Also marginalzing the distribution on a subset $I \subseteq [N]$ and conditioning on the event that $J \subseteq Y$ both result in new DPPs and closed form expressions for their kernels are known \cite{borodin2005eynard}.

There has been much less work on the problem of learning the parameters of a DPP. A variety of heuristics have been proposed, including Expectation-Maximization \cite{GilKulFox14}, MCMC \cite{AffFoxAda14}, and fixed point algorithms \cite{MarSra15}. All of these attempt to solve a nonconvex optimization problem and no guarantees on their statistical performance are known. Recently, Brunel \etal \cite{brunel2017maximum} studied the rate of estimation achieved by the maximum likelihood estimator, but the question of efficient computation remains open.

Apart from positive results on sampling, marginalization and conditioning, most provable results about DPPs are actually negative. It is conjectured that the maximum likelihood estimator is \textsf{NP}-hard to compute \cite{Kul12}. Actually, approximating the mode of size $k$ of a DPP to within a $c^k$ factor  is known to be \textsf{NP}-hard for some $c>1$ \cite{ccivril2009selecting, summa2015largest}. The best known algorithms currently obtain a $e^k + o(k)$ approximation factor \cite{nikolov2015randomized, nikolov2016maximizing}.

In this work, we bypass the difficulties associated with maximum likelihood estimation by using the \emph{method of moments} to achieve optimal sample complexity. We exhibit a parameter $\ell$ that we call the {\em cycle sparsity} of the graph induced by the kernel $K$ which governs the number of moments that need to be considered and, thus, the sample complexity. Moreover, we use a refined version of Horton's algorithm \cite{horton1987polynomial,amaldi2010efficient} to implement the method of moments in polynomial time.

The cycle sparsity of a graph is the smallest integer $\ell$ so that the cycles of length at most $\ell$ yield a basis for the cycle space of the graph. Even though there are in general exponentially many cycles in a graph to consider, Horton's algorithm constructs a minimum weight cycle basis and, in doing so, also reveals the parameter $\ell$ together with a collection of at most $\ell$ induced cycles spanning the cycle space. 

We use such cycles in order to construct our method of moments estimator. For any fixed $\ell \ge 2$, our overall algorithm has sample complexity $$ n = O\Big( \big(\frac{C}{\alpha}\big)^{2\ell}+ \frac{\log N}{\alpha^2 \varepsilon^2} \Big)$$
for some constant $C > 1$ and runs in time polynomial in $n$ and $N$, and learns the parameters up to an additive $\varepsilon$ with high probability. The $(C/\alpha)^{2\ell}$ term corresponds to the number of samples needed to recover the signs of the entries in $K$. We complement this result with a minimax lower bound (Theorem~\ref{ThmLB}) to show that this sample complexity is in fact near optimal. In particular, we show that there is an infinite family of graphs with cycle sparsity $\ell$ (namely length $\ell$ cycles) on which any algorithm requires at least $(C'\alpha)^{-2\ell}, C'>1$ samples  to recover the signs of the entries of $K$. Finally, we show experimental results that confirm many quantitative aspects of our theoretical predictions. Together, our upper bounds, lower bounds, and experiments present a nuanced understanding of which DPPs can be learned provably and efficiently.

\section{Estimation of the Kernel}
\label{SEC:EstKer}

\subsection{Model and definitions}
Let $Y_1, \ldots, Y_n$ be $n$ independent copies of $Y\sim \DPP(K)$, for some unknown kernel $K$ such that $0 \preceq K \preceq I_N$.  It is well known that $K$ is identified by $\DPP(K)$ only up to flips of the signs of its rows and columns: If $K'$ is another symmetric matrix with $0\preceq K' \preceq I_N$, then $\DPP(K')$=$\DPP(K)$ if and only if $K'=DKD$ for some $D\in\mathcal D_N$, where $\cD_N$ denotes the class of all $N\times N$ diagonal matrices with only $1$ and $-1$ on their diagonal \cite[Theorem 4.1]{Kul12}. We call such a transform a $\mathcal D_N$-similarity of $K$.

In view of this equivalence class, we define the following pseudo-distance between kernels $K$ and $K'$:
$$
\rho(K,K')=\inf_{D\in\mathcal D_N}|DKD-K'|_{\infty}\,,
$$
where for any matrix $K$, $|K|_{\infty}=\max_{i,j \in [N]}|K_{i,j}|$ denotes the entrywise sup-norm.



For any $S \subset [N]$, write $\Delta_S=\det(K_S)$ where $K_S$ denotes the $|S|\times |S|$ submatrix of $K$ obtained by keeping rows and colums with indices in $S$.  Note that for $1\leq i\neq j \leq N$, we have the following relations:
$$K_{i,i}=\rP[i\in Y], \quad \quad \Delta_{\{i,j\}}=\rP[\{i,j\}\subseteq Y],$$
and $$|K_{i,j}|=\sqrt{K_{i,i}K_{j,j}-\Delta_{\{i,j\}}}.$$
Therefore, the principal minors of size one and two of $K$  determine $K$ up to the sign of its off diagonal entries. 
In fact, for any $K$, there exists an $\ell$, depending only on the graph $G_K$ induced by $K$, such that $K$ can be recovered up to a $\mathcal D_N$-similarity with only the knowledge of its principal minors of size at most $\ell$. We will show that this $\ell$ is exactly the cycle sparsity.

\subsection{DPPs and graphs}
In this section, we review some of the interplay between graphs and DPPs that play a key role in the definition of our estimator.


We begin by recalling some standard graph theoretic notions. Let $G=([N],E)$, $|E|=m$. A cycle $C$ of $G$ is any connected subgraph in which each vertex has even degree. Each cycle $C$ is associated with an incidence vector $x \in GF(2)^m$ such that $x_e=1$ if $e$ is an edge in $C$ and $x_e=0$ otherwise.  The \emph{cycle space} $\cC$ of $G$ is the subspace of $GF(2)^m$ spanned the incidence vectors of the cycles in $G$. The dimension $\nu_G$ of the cycle space is called {\it cyclomatic number} and it is well known that $\nu_G :=m-N+\kappa(G)$, where $\kappa(G)$ denotes the number of connected components of $G$.

Recall that a {\it simple cycle} is a graph where every vertex has either degree two or zero and the set of vertices with degree two form a connected set. A \emph{cycle basis} is a basis of $\mathcal{C} \subset GF(2)^m$ such that every element is a simple cycle. It is well known that every cycle space has a cycle basis of induced cycles.


%
%
%
%

\begin{definition}
The \emph{cycle sparsity} of a graph $G$ is the minimal $\ell$ for which $G$ admits a cycle basis of induced cycles of length at most $\ell$, with the convention that $\ell = 2$ whenever the cycle space is empty. A corresponding cycle basis is called a \emph{shortest maximal cycle basis}.
\end{definition}

A \emph{shortest maximal cycle basis} of the cycle space was also studied for other reasons by \cite{CHICKERING199555}. We defer a discussion of computing such a basis to Section~\ref{SEC:Alg}.
   


%


For any subset $S \subseteq [N]$, denote by $G_K(S) = (S, E(S))$ the subgraph of $G_K$ induced by $S$. A matching of $G_K(S)$ is a subset $M\subseteq E(S)$ such that any two distinct edges in $M$ are not adjacent in $G(S)$. The set of vertices incident to some edge in $M$ is denoted by $V(M)$. We denote by $\mathcal M(S)$ the collection of all matchings of $G_K(S)$. Then, if $G_K(S)$ is an induced cycle, we can write the principal minor $\Delta_S=\det(K_S)$ as follows:
\begin{align}
\label{matchings}
\Delta_S &= \sum_{M\in\mathcal{M}(S)} (-1)^{|M| }  \prod_{\{i,j\} \in M } K_{i,j}^2 \prod_{i \not \in V(M)} K_{i,i} + 2 \times (-1)^{ |S| +1 } \prod_{\{i,j\} \in E(S)} K_{i,j}.
\end{align}

Others have considered the relationship between the principal minors of $K$ and recovery of $\DPP(K)$. There has been work regarding the \emph{symmetric principal minor assignment problem}, namely the problem of computing a matrix given an oracle that gives any principal minor in constant time \cite {Rising2015126}.

In our setting, we can approximate the principal minors of $K$ by empirical averages. However the accuracy of our estimator deteriorates with the size of the principal minor and we must therefore estimate the smallest possible principal minors in order to achieve optimal sample complexity. Here, we prove a new result, namely, that the smallest $\ell$ such that all the principal minors of $K$ are uniquely determined by those of size at most $\ell$ is exactly the cycle sparsity of the graph induced by $K$.

\begin{proposition}
Let $K \in \R^{N \times N}$ be a symmetric matrix, $G_K$ be the graph induced by $K$, and $\ell \ge 3$ be some integer. The kernel $K$ is completely determined up to $\cD_N$ similarity by its principal minors of size at most $\ell$ if and only if the cycle sparsity of  $G_K$ is at most $\ell$. \end{proposition}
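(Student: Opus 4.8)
The plan is to reduce the statement to a problem about sign patterns on the induced graph $G_K$ and then settle it with linear algebra over $GF(2)$. First I would note that any symmetric $K'$ sharing every principal minor of $K$ of size at most $\ell$ shares, in particular, those of size $1$ and $2$; hence $K'$ has the same diagonal and the same off-diagonal magnitudes $|K'_{i,j}|=|K_{i,j}|$, so $G_{K'}=G_K$ and $K'_{i,j}=\tau(\{i,j\})K_{i,j}$ for some sign function $\tau\colon E\to\{-1,+1\}$. Since $K\mapsto DKD$ with $D=\mathrm{diag}(d_1,\dots,d_N)\in\cD_N$ multiplies $K_{i,j}$ by $d_id_j$, the matrix $K'$ is a $\cD_N$-similarity of $K$ precisely when $\tau(\{i,j\})=d_id_j$ for some $d$. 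Identifying $\{-1,+1\}$ with $GF(2)$, this says exactly that $\tau$ lies in the image of the coboundary map $\delta\colon GF(2)^N\to GF(2)^m$, $\delta(d)_{\{i,j\}}=d_i+d_j$. As $\cC=\ker\partial$ for the boundary map $\partial$ and $\delta=\partial^\top$, we get $\mathrm{im}\,\delta=\cC^\perp$. Therefore recovering $K$ up to $\cD_N$-similarity is the same as recovering the switching class of $\tau$, which by non-degeneracy of this pairing is determined exactly by the cycle signatures $\prod_{e\in C}\tau(e)$ for $C\in\cC$; moreover $C\mapsto\prod_{e\in C}\tau(e)$ is a $GF(2)$-linear functional on $\cC$, hence determined by its values on any basis.

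For sufficiency, suppose the cycle sparsity is at most $\ell$, so $\cC$ has a basis of induced cycles of length at most $\ell$. For each basis cycle $C$ I would apply \eqref{matchings} with $S=V(C)$: every term on the right-hand side except the last is a product of quantities $K_{i,j}^2=K_{i,i}K_{j,j}-\Delta_{\{i,j\}}$ and $K_{i,i}$, all fixed by minors of size at most $2$, while the last term equals $2(-1)^{|S|+1}\prod_{e\in C}K_e$. Thus $\Delta_S$ determines $\prod_{e\in C}K_e$ and in particular its signature $\prod_{e\in C}\tau(e)$. Since these cycles form a basis of $\cC$ and the signature is $GF(2)$-linear, we recover the signature of every cycle, hence the switching class of $\tau$, hence $K$ up to $\cD_N$-similarity.

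For necessity I would argue the contrapositive: assume the cycle sparsity exceeds $\ell$, so the span $W\subseteq\cC$ of all induced cycles of length at most $\ell$ is a proper subspace (indeed cycle sparsity at most $\ell$ is equivalent to $W=\cC$, since a spanning set of short induced cycles contains a basis of them). The crucial structural fact is that a principal minor $\Delta_S$ with $|S|\le\ell$ depends on the signs of $K$ only through signatures of simple cycles of length at most $\ell$, all of which lie in $W$. To prove this I would expand $\det(K_S)=\sum_{\pi}\mathrm{sgn}(\pi)\prod_i K_{i,\pi(i)}$ and group permutations by disjoint-cycle structure: fixed points contribute $K_{i,i}$, transpositions contribute $-K_{i,j}^2$, and a cyclic block of length $k\ge 3$ together with its reverse contributes $\pm 2\prod_{e}K_e$ over the corresponding simple cycle of length $k\le|S|\le\ell$. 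Hence the sign of each monomial enters only through signatures of simple cycles of length at most $\ell$; and every such cycle lies in $W$, because a chord splits a non-induced cycle into two strictly shorter cycles of which it is the symmetric difference, so by induction on length all simple cycles of length at most $\ell$ belong to the span of induced cycles of length at most $\ell$.

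To conclude, since $W\subsetneq\cC$ we have $\cC^\perp\subsetneq W^\perp$, so I can pick $\tau\in W^\perp\setminus\cC^\perp$: then $\prod_{e\in C}\tau(e)=1$ for every $C\in W$, while $\tau\notin\mathrm{im}\,\delta$ means the signing $K'_{i,j}=\tau(\{i,j\})K_{i,j}$ is \emph{not} a $\cD_N$-similarity of $K$. By the structural fact, every monomial of every $\Delta_S$ with $|S|\le\ell$ is unchanged on passing from $K$ to $K'$, so $K$ and $K'$ agree on all minors of size at most $\ell$ yet are not $\cD_N$-equivalent, proving $K$ is not determined. I expect the main obstacle to be exactly this necessity direction, namely establishing the ``blindness'' of small minors: one must work with the full determinant expansion rather than the induced-cycle formula \eqref{matchings} alone, and prove that short—possibly non-induced—cycles always lie in the span of short induced cycles, which is precisely what makes the obstruction $\tau$ invisible to all minors of size at most $\ell$.
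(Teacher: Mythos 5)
Your proof is correct and follows the same overall skeleton as ours: reduce to sign recovery via the size-one and size-two minors, read off the signatures of a short induced cycle basis from \eqref{matchings} for sufficiency, and for necessity flip signs along a vector orthogonal to the span of the short induced cycles but not to the whole cycle space. The difference lies in what you prove yourself versus what we import. We invoke Theorems 3.12 and 3.14 of \cite{Rising2015126} twice: once to say that cycle signs together with magnitudes determine $K$ up to $\cD_N$-similarity, and once, applied to the induced subgraph on $S$, to conclude $\det(K_S)=\det(K'_S)$ for all $|S|\le\ell$. You replace the first by the observation that $\cD_N$-similarity acts through the coboundary map, whose image is exactly $\cC^\perp$, so that switching classes are in bijection with cycle-signature functionals on $\cC$; and you replace the second by a direct Leibniz expansion of $\det(K_S)$ grouped by permutation cycle type, combined with the chord-splitting induction showing that every simple cycle of length at most $\ell$ (induced or not) lies in the span of the induced cycles of length at most $\ell$. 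This yields a fully self-contained argument, and in the necessity direction you obtain non-equivalence of $K$ and $K'$ directly from $\tau\notin\cC^\perp$, without having to exhibit, as we do, a specific principal minor of size greater than $\ell$ on which they differ. The cost is the extra combinatorial bookkeeping in the determinant expansion, which our appeal to the principal minor assignment literature lets us skip; both routes are sound.
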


\begin{proof}
Note first that all the principal minors of $K$ completely determine $K$ up to a $\mathcal D_N$-similarity \cite[Theorem 3.14]{Rising2015126}. Moreover,  recall that principal minors of degree at most 2 determine the diagonal entries of $K$ as well as the magnitude of its off diagonal entries. In particular, given these principal minors, one only needs to recover the signs of the off diagonal entries of $K$. Let the \emph{sign} of cycle $C$ in $K$ be the product of the signs of the entries of $K$ corresponding to the edges of $C$.

Suppose $G_K$ has cycle sparsity $\ell$ and let $(C_1,\ldots,C_{\nu})$ be a cycle basis of $G_K$ where each $C_i, i \in [\nu]$ is an induced cycle of length at most $\ell$. We have already seen that the diagonal entries of $K$ and the magnitudes of its off-diagonal entries are determined by the principal minors of size one and two.  
By \eqref{matchings},  the sign of any $C_i, i \in [\nu]$  is completely determined by the principal minor $\Delta_S$, where $S$ is the set of vertices of $C_i$ and is such that $|S|\leq\ell$. Moreover, for $i \in [\nu]$, let  $x_i \in GF(2)^m$ denote the incidence  vector of $C_i$. By definition, the incidence vector $x$ of any cycle $C$ is given by $\sum_{i \in \cI} x_i$ for some subset $\cI \subset [\nu]$. Then, it is not hard to see that the sign of $C$ is then given by the product of the signs of $C_i, i \in \cI$ and thus by corresponding principal minors. In particular, the signs of all cycles are determined by the principal minors $\Delta_S$ with $|S|\leq \ell$. In turn, by Theorem 3.12 in \cite{Rising2015126}, the signs of all cycles completely determine $K$, up to a $\mathcal D_N$-similarity.
%
%
%

Next, suppose the cycle sparsity of $G_K$ is at least $\ell+1$ and let $\cC_\ell$ be the subspace of $GF(2)^m$ spanned by the induced cycles of length at most $\ell$ in $G_K$. Let $x_1,\ldots,x_\nu$ be a basis of $\cC_\ell$ made of the incidence column vectors of induced cycles of length at most $\ell$ in $G_K$ and form the matrix $A \in GF(2)^{m \times \nu}$ by concatenating the $x_i$'s. Since $\cC_\ell$ does not span the cycle space of $G_K$, $\nu<\nu_{G_K}\leq m$. Hence, the rank of $A$ is less than $m$, so the null space of $A^\top$ is non trivial. Let $\bar x$ be the incidence column vector of an induced cycle $\bar C$ that is not in $\cC_\ell$ and let $h\in GL(2)^m$ with $A^\top h=0$, $h\neq 0$ and $\bar x^\top h=1$. These three conditions are compatible because $\bar C\notin\cC_\ell$. 
We are now in a position to define an alternate kernel $K'$ as follows: let $K'_{i,i}=K_{i,i}$ and $|K'_{i,j}|=|K_{i,j}|$ for all $i,j \in [N]$. We define the signs of the off diagonal entries of $K'$ as follows: for all edges $e=\{i,j\}, i \neq j$,  $\sgn(K'_{e})=\sgn(K_{e})$ if $h_e=0$ and $\sgn(K'_{e})=-\sgn(K_{e})$ otherwise. We now check that $K$ and $K'$ have the same principal minors of size at most $\ell$ but differ on a principal minor of size larger than $\ell$. To that end, let $x$ be the incidence vector of a cycle $C$ in $\cC_\ell$ so that $x=Aw$ for some $w \in GL(2)^{\nu}$. Thus the sign of $C$ in $K$ is given by
$$
\prod_{e\,:\,x_e=1}K_e=(-1)^{x^\top h}\prod_{e\,:\,x_e=1}K'_e=(-1)^{w^\top A^\top h}\prod_{e\,:\,x_e=1}K'_e=\prod_{e\,:\,x_e=1}K'_e
$$
because $A^\top h=0$. Therefore, the sign of any $C \in \cC_{\ell}$ is the same in $K$ and $K'$. Now, let $S\subseteq [N]$ with $|S|\leq \ell$ and let $G=G_{K_S}=G_{K'_S}$ be the graph corresponding to $K_S$ (or, equivalently, to $K_S'$). For any induced cycle $C$ in $G$, $C$ is also an induced cycle in $G_K$ and its length is at most $\ell$. Hence, $C\in\cC_\ell$ and the sign of $C$ is the same in $K$ and $K'$. By \cite[Theorem 3.12]{Rising2015126}, $\det(K_S)=\det(K'_S)$.
Next observe that the sign of $\bar C$ in $K$ is given by
$$
\prod_{e\,:\,\bar x_e=1}K_e=(-1)^{\bar x^\top h}\prod_{e\,:\,\bar x_e=1}K'_e=-\prod_{e\,:\,x_e=1}K'_e
$$
Note also that since $\bar C$ is an induced cycle of $G_K=G_{K'}$, the above quantity is nonzero. Let $\bar S$ be the set of vertices in $\bar C$. By~\eqref{matchings} and the above display, we have $\det(K_{\bar S}) \neq \det(K'_{\bar S})$. Together with~\cite[Theorem 3.14]{Rising2015126}, it yields $K \neq DK'D$ for all $D \in \cD_N$.

 \end{proof}

%
%

%
%
%
%

\subsection{Definition of the Estimator}
\label{SUBSEC:DefEst}

Our procedure is based on the previous lemmata and can be summarized as follows. We first estimate the diagonal entries (i.e., the principal minors of size one) of $K$ by the method of moments. By the same method, we estimate the principal minors of size two of $K$, and we deduce estimates of the magnitude of the off-diagonal entries. To use these estimates to deduce an estimate $\hat G$ of $G_K$, we make the following assumption on the kernel $K$.
\begin{assumption} \label{Assumption1}
	Fix $\alpha \in (0,1)$. For all $1\leq i<j\leq N$, either $K_{i,j}=0$, or $|K_{i,j}|\geq \alpha$.
\end{assumption}
Finally, we find a shortest maximal cycle basis of $\hat G$ and we set the signs of our non-zero off-diagonal entry estimates by using estimators of the principal minors induced by the elements of the basis, again obtained by the method of moments.

For $S \subseteq [N]$, set 
$$
{\hat \Delta_S =\frac{1}{n}\sum_{p=1}^n \mathds 1_{S \subseteq Y_p}}\,,
$$
and define 
$$ \hat K_{i,i} = \hat \Delta_{\{i\}} \quad \text{ and } \quad \hat B_{i,j}=\hat K_{i,i}\hat K_{j,j}-\hat \Delta_{\{i,j\}},$$
where $\hat K_{i,i}$ and $\hat B_{i,j}$ are our estimators of $K_{i,i}$ and $K_{i,j}^2$, respectively. 

Define $\hat G = ([N], \hat E)$, where, for $i \ne j$, $\{i,j\} \in \hat E$ if and only if $\hat B_{i,j} \ge \frac{1}{2} \alpha^2$. The graph $\hat G$ is our estimator of $G_K$. Let $\{\hat C_1,...,\hat C_{\nu_{\hat G}}\}$ be a shortest maximal cycle basis of the cycle space of $\hat G$. Let $\hat S_i \subseteq [N]$ be the subset of vertices of $\hat C_i$, for $1 \le i \le \nu_{\hat G}$. We define
\begin{align*}
\hat H_i & = \hat \Delta_{\hat S_i}  - \sum_{M\in\mathcal{M}(\hat S_i)}(-1)^{|M|}  \prod_{\{i,j\} \in M } \hat B_{i,j}  \prod_{i \not \in V(M)} \hat K_{i,i} ,
\end{align*}
for $1\leq i\leq \nu_{\hat G}$. In light of \eqref{matchings}, for large enough $n$, this quantity should be close to 
$$
H_i=2 \times (-1)^{ |\hat S_i| +1 } \prod_{\{i,j\} \in E(\hat S_i)} K_{i,j}\,. 
$$
We note that this definition is only symbolic in nature, and computing $\hat H_i$ in this fashion is extremely inefficient. Instead, to compute it in practice, we will use the determinant of an auxiliary matrix, computed via a matrix factorization. Namely, let us define the matrix $\widetilde K \in \R^{N \times N}$ such that $\widetilde K_{i,i} = \hat K_{i,i}$ for $1\le i \le N$, and $\widetilde K_{i,j} = \hat B^{1/2}_{i,j}$. We have
\begin{align*}
\det \widetilde K_{\hat S_i} &= \sum_{M\in \mathcal{M}} (-1)^{|M|}\prod_{\{i,j\} \in M } \hat B_{i,j}   \prod_{i \not \in V(M)} \hat K_{i,i}  + 2 \times (-1)^{ |\hat S_i| +1 }  \prod_{\{i,j\} \in \hat E(\hat S_i)} \hat B^{1/2}_{i,j},
\end{align*}
so that we may equivalently write
$$\hat H_i = \hat \Delta_{\hat S_i} - \det ( \widetilde K_{\hat S_i} ) + 2 \times (-1)^{|\hat S_i|+1} \prod_{\{i,j\} \in \hat E(\hat S_i)} \hat B_{i,j}^{1/2}.$$



Finally, let $\hat m=|\hat E|$. Set the matrix $A\in GF(2)^{\nu_{\hat G} \times \hat m}$ with $i$-th row representing $\hat C_i$ in $GF(2)^m$, $1\leq i\leq \nu_{\hat G}$, $b=(b_1,\ldots,b_{\nu_{\hat G}}) \in GF(2)^{\nu_{\hat G}}$ with $b_i = \frac{1}{2}[ \sgn(\hat H_i) +1]$, $1\le i \le \nu_{\hat G}$, and let $x \in GF(2)^m$ be a solution to the linear system $A x = b$ if a solution exists, $x = \mathds 1_m$ otherwise.

We define $\hat K_{i,j}=0$ if $\{i,j\}\notin \hat E$ and $\hat K_{i,j} =  \hat K_{j,i} = (2 x_{\{i,j\}} -1 ) \hat B^{1/2}_{i,j}$ for all $\{i,j\} \in \hat E$. 

\subsection{Geometry}
%
%

The main result of this subsection is the following lemma which relates the quality of estimation of $K$ in terms of  $\rho$ to  the quality of estimation 
of the principal minors $\Delta_S=\det(K_S)$, $S \subset [N]$.

\begin{lemma}
\label{thm:paramdist}
Let $K$ satisfy Assumption \ref{Assumption1} and let $\ell$ be the cycle sparsity of $G_K$. Let $\varepsilon>0$. If $| \hat \Delta_{S} - \Delta_{S} | \leq \varepsilon$ for all $S\subseteq [N]$ with $|S|\le 2$ and if $|\hat \Delta_{S} - \Delta_{S} | \leq (\alpha/4)^{|S|}$ for all $S\subseteq [N]$ with $3\leq |S|\leq \ell$, then
$$
{\rho(\hat K, K) < \frac{4 \varepsilon}{\alpha}}.
$$
\end{lemma}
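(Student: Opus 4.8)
The plan is to show that the hypotheses force the three discrete decisions made by the estimator---the edge set $\hat E$, the signs of the basis cycles, and hence (up to a $\cD_N$-similarity) the signs of all off-diagonal entries---to be exactly correct, and then to control the remaining error, which lives entirely in the magnitudes $\hat B_{i,j}^{1/2}$. Concretely, I will exhibit a single $D\in\cD_N$ for which $D\hat K D$ and $K$ agree in sign on every edge, and then bound $|D\hat K D - K|_\infty$ entrywise; since $\rho(\hat K,K)\le|D\hat K D-K|_\infty$, this yields the claim.

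The first step is graph recovery, $\hat E = E$. Writing $B_{i,j}=K_{i,i}K_{j,j}-\Delta_{\{i,j\}}=K_{i,j}^2$, the triangle inequality together with $\hat K_{i,i},K_{i,i}\in[0,1]$ gives $|\hat B_{i,j}-K_{i,j}^2|\le |\hat K_{i,i}\hat K_{j,j}-K_{i,i}K_{j,j}|+|\hat\Delta_{\{i,j\}}-\Delta_{\{i,j\}}|\le 3\varepsilon$. By Assumption~\ref{Assumption1} a non-edge has $K_{i,j}^2=0$ while an edge has $K_{i,j}^2\ge\alpha^2$, so in the relevant small-$\varepsilon$ regime the threshold $\tfrac12\alpha^2$ separates the two cases and $\hat G=G_K$. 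In particular $\hat G$ has cycle sparsity $\ell$, so its shortest maximal cycle basis $\hat C_1,\dots,\hat C_{\nu_{\hat G}}$ consists of induced cycles of length at most $\ell$; each vertex set $\hat S_i$ satisfies $|\hat S_i|\le\ell$ and $G_K(\hat S_i)$ is an induced cycle, so formula~\eqref{matchings} applies verbatim.

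The crux is sign recovery: I must show $\sgn(\hat H_i)=\sgn(H_i)$ for every basis cycle. By~\eqref{matchings}, $H_i=\Delta_{\hat S_i}-\sum_{M}(-1)^{|M|}\prod_{\{i,j\}\in M}K_{i,j}^2\prod_{i\notin V(M)}K_{i,i}$, whence $\hat H_i-H_i=(\hat\Delta_{\hat S_i}-\Delta_{\hat S_i})-\sum_M(-1)^{|M|}\bigl(\prod_{\{i,j\}\in M}\hat B_{i,j}\prod_{i\notin V(M)}\hat K_{i,i}-\prod_{\{i,j\}\in M}K_{i,j}^2\prod_{i\notin V(M)}K_{i,i}\bigr)$. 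The first term is at most $(\alpha/4)^{|\hat S_i|}$ by hypothesis. For the matching sum I will telescope each product of at most $|\hat S_i|$ factors (each factor within $3\varepsilon$ of a quantity in $[0,1]$) and use that an induced cycle on $|\hat S_i|$ vertices has at most $2^{|\hat S_i|}$ matchings, obtaining a bound of the form $C^{|\hat S_i|}\varepsilon$. Since $|H_i|=2\prod_{\{i,j\}\in E(\hat S_i)}|K_{i,j}|\ge 2\alpha^{|\hat S_i|}$, the hypotheses (which keep both $(\alpha/4)^{|\hat S_i|}$ and $C^{|\hat S_i|}\varepsilon$ below $2\alpha^{|\hat S_i|}$) give $|\hat H_i-H_i|<|H_i|$, hence the correct sign. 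This is the main obstacle: the perturbation of the matching expansion grows with $|\hat S_i|$, and keeping it below $2\alpha^{|\hat S_i|}$ is precisely what forces the accuracy $(\alpha/4)^{|S|}$ demanded for $3\le|S|\le\ell$.

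With every basis cycle correctly signed, the true sign vector of $K$ solves $Ax=b$ over $GF(2)$ (the sign of a cycle is the $GF(2)$-combination of its edge signs), so a solution exists and the recovered off-diagonal signs agree with those of $K$ on a cycle basis, hence on the whole cycle space. By the Proposition (via \cite[Theorem 3.12]{Rising2015126}) there is a $D\in\cD_N$ with $\sgn((D\hat K D)_{i,j})=\sgn(K_{i,j})$ on every edge. For this $D$ I bound $|D\hat K D-K|_\infty$: diagonal entries differ by $|\hat K_{i,i}-K_{i,i}|\le\varepsilon$; non-edges contribute $0$; and on each edge the signs agree, so the error is $|\hat B_{i,j}^{1/2}-|K_{i,j}||=|\hat B_{i,j}-K_{i,j}^2|/(\hat B_{i,j}^{1/2}+|K_{i,j}|)\le 3\varepsilon/\alpha$, using $|K_{i,j}|\ge\alpha$. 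Taking the maximum and using $\alpha<1$ gives $\rho(\hat K,K)\le 3\varepsilon/\alpha<4\varepsilon/\alpha$.
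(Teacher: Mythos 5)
Your proposal follows the same route as the paper's proof: recover $\hat G=G_K$ by thresholding $\hat B_{i,j}$, recover the sign of each basis cycle by showing $|\hat H_i-H_i|<|H_i|$ where $|H_i|=2\prod_{e\in E(\hat S_i)}|K_e|\ge 2\alpha^{|\hat S_i|}$, propagate signs through the $GF(2)$ system, and finish with $|\hat B_{i,j}^{1/2}-|K_{i,j}||\le|\hat B_{i,j}-K_{i,j}^2|/\alpha$. Your treatment of the last two steps (in particular, that any solution of $Ax=b$ differs from the true sign vector by a cycle-preserving flip, hence a $\cD_N$-similarity) is if anything more explicit than the paper's.

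The gap is in the quantitative control of the two discrete decisions. For graph recovery you derive $|\hat B_{i,j}-K_{i,j}^2|\le 3\varepsilon$ and appeal to ``the relevant small-$\varepsilon$ regime''; for sign recovery you bound the perturbation of the matching sum by $C^{|\hat S_i|}\varepsilon$ and assert that the hypotheses keep this below $2\alpha^{|\hat S_i|}$. Neither assertion follows from the hypotheses as you use them: $\varepsilon>0$ is arbitrary, with no assumed relation to $\alpha$, so $3\varepsilon$ need not clear the $\alpha^2/2$ threshold, and $C^{|\hat S_i|}\varepsilon$ need not be below $2\alpha^{|\hat S_i|}$, which is exponentially small in $|\hat S_i|$. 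The paper deliberately decouples these decisions from $\varepsilon$: it controls the size-one and size-two minors at accuracy $(\alpha/4)^{2}=\alpha^2/16$, which gives $|\hat B_{i,j}-K_{i,j}^2|<\alpha^2/4$ (enough to threshold at $\alpha^2/2$), and it measures each factor's perturbation in the matching expansion in units of $t=(\alpha/4)^{|\hat S_i|}$, obtaining $|\hat H_i-H_i|\le \alpha^{|\hat S_i|}<2\alpha^{|\hat S_i|}\le|H_i|$ with no $\varepsilon$ appearing anywhere; only the final magnitude bound uses the $\varepsilon$-accuracy. To close your argument you must bound the matching-sum error by a quantity comparable to $\alpha^{|\hat S_i|}$ rather than to $\varepsilon$. (In fairness, the lemma's stated hypothesis only grants accuracy $\varepsilon$ for $|S|\le 2$, so the paper's own proof is also implicitly invoking the $(\alpha/4)^{|S|}$-type bound on those small sets --- this is how the lemma is applied in Theorem~\ref{ThmUB}, where both tail bounds are imposed on size-two sets; but your version ties the sign decisions to $\varepsilon$ in a way that cannot be recovered from the hypotheses alone.)
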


\begin{proof}

We can bound $| \hat B_{i,j} - K_{i,j}^2|$, namely,
\begin{align*}
	\hat B_{i,j} & \leq (K_{i,i} + \alpha^2/16)(K_{j,j} + \alpha^2/16) - (\Delta_{\{i,j\}} - \alpha^2/16)  \leq K^2_{i,j} + \alpha^2/4
\end{align*}
and
\begin{align*}
	\hat B_{i,j} & \geq (K_{i,i} - \alpha^2/16)(K_{j,j} - \alpha^2/16) - (\Delta_{\{i,j\}} + \alpha^2/16) \geq K^2_{i,j} - 3 \alpha^2/16,
\end{align*}
giving $\displaystyle{| \hat B_{i,j} - K_{i,j}^2| < \alpha^2/4 }$. Thus, we can correctly determine whether $K_{i,j} = 0$ or $|K_{i,j}| \ge \alpha$, yielding $\hat G=G_K$. In particular, the cycle basis $\hat C_1,\ldots,\hat C_{\nu_{\hat G}}$ of $\hat G$ is a cycle basis of $G_K$. Let $1\leq i\leq \nu_{\hat G}$. Denote by $t=(\alpha/4)^ {|S_i|}$. We have
\begin{align*}
 \Big | \hat H_i - H_i \Big| & \le |  \mathcal{M}(\hat S_i)| \max_{x \in \pm 1} \left[ (1 + 4 t x)^{|\hat S_i|} -1 \right] \\ 
 & \le |  \mathcal{M}(\hat S_i)| \left[ (1 + 4 t)^{|\hat S_i|} -1 \right] \\ 
 & \le T \left(|\hat S_i|, \left\lfloor \frac{|\hat S_i|}{2} \right\rfloor \right) 4 t \; T( |\hat S_i| , |\hat S_i| ) \\ 
 & \le 4t \; ( 2^{\frac{|\hat S_i|}{2}} - 1) ( 2^{|\hat S_i|} - 1 )  \le t 2^{2|\hat S_i|}=\alpha^{|\hat S_i|},
\end{align*}
where, for positive integers $p<q$, we denote by $T(q,p) = \sum_{i=1}^p {q \choose i}$. Therefore, we can determine the sign of the product $$\displaystyle{\prod_{\{i,j\} \in E(\hat S_i)} K_{i,j}}$$ for every element in the cycle basis and recover the signs of the non-zero off-diagonal entries of $K_{i,j}$. Hence, $\displaystyle{\rho(\hat K,K)=\max_{1\leq i,j\leq N}\left||\hat K_{i,j}|-|K_{i,j}|\right|}$. For $i=j$, $\displaystyle{\left||\hat K_{i,j}|-|K_{i,j}|\right|=|\hat K_{i,i}-K_{i,i}|\leq \varepsilon}$. For $i\neq j$ with $\{i,j\}\in \hat E=E$, one can easily show that $\displaystyle{\left|\hat B_{i,j}-K^2_{i,j}\right|\leq 4\varepsilon}$, yielding
\begin{align*}
| \hat B^{1/2}_{i,j} - | K_{i,j} | | &\le \frac{ 4\varepsilon }{ | \hat B^{1/2}_{i,j} + | K_{i,j} | | } \le \frac{4\varepsilon}{\alpha},
\end{align*}
which completes the proof.
\end{proof}

We are now in a position to establish a sufficient sample size to estimate $K$ within distance $\varepsilon$.

\begin{theorem} \label{ThmUB}
Let $K$ satisfy Assumption \ref{Assumption1} and let $\ell$ be the cycle sparsity of $G_K$. Let $\varepsilon>0$. For any $A>0$, there exists $C>0$ such that 
$$
 n \ge C\Big(\frac{1}{\alpha^2\varepsilon^{-2}}+\ell\big(\frac{4}{\alpha}\big)^{2\ell}\Big)\log N\,,
 $$
yields $\rho(\hat K, K)\le \varepsilon$ with probability at least $1-N^{-A}$.
\end{theorem}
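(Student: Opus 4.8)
The plan is to treat Lemma~\ref{thm:paramdist} as a deterministic backbone and reduce the entire statement to a concentration-plus-union-bound argument for the empirical minors $\hat\Delta_S$. The starting observation is that, for each fixed $S$, the quantities $\mathds 1_{S\subseteq Y_1},\ldots,\mathds 1_{S\subseteq Y_n}$ are i.i.d.\ $\{0,1\}$-valued with mean $\Delta_S=\mathbb P[S\subseteq Y]$, so $\hat\Delta_S$ is an unbiased average of bounded variables. Hoeffding's inequality then supplies the only probabilistic input:
$$\mathbb P\big(|\hat\Delta_S-\Delta_S|>t\big)\le 2\exp(-2nt^2)\qquad (t>0).$$
Everything else is deterministic bookkeeping driven by the accuracy levels demanded by the lemma.

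Next I would convert the hypotheses of Lemma~\ref{thm:paramdist} into per-size accuracy targets. To conclude $\rho(\hat K,K)\le\varepsilon$, I apply the lemma with its free parameter set to $\varepsilon'=\min\{\alpha\varepsilon/4,\ \alpha^2/16\}$; the first entry forces $4\varepsilon'/\alpha\le\varepsilon$, while the cap at $\alpha^2/16$ guarantees the graph-recovery step $\hat G=G_K$ inside the lemma's proof (which there uses the threshold $\alpha^2/16$) still goes through. It therefore suffices to guarantee, simultaneously over all relevant $S$,
$$|\hat\Delta_S-\Delta_S|\le \delta_{|S|},\qquad \delta_1=\delta_2=\varepsilon',\quad \delta_k=(\alpha/4)^k\ \text{ for }3\le k\le\ell,$$
after which the lemma yields $\rho(\hat K,K)<4\varepsilon'/\alpha\le\varepsilon$ deterministically.

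The core of the argument is the union bound, and this is where the stated rate emerges. For the sizes $k\le 2$ there are at most $N+\binom N2\le N^2$ sets, each needing accuracy $\varepsilon'$; by Hoeffding the probability that any of them fails is at most $2N^2\exp(-2n\varepsilon'^2)$, which drops below $\tfrac12 N^{-A}$ once $n$ exceeds a constant (depending on $A$) times $(\alpha\varepsilon)^{-2}\log N$, producing the first term. For each size $3\le k\le\ell$ there are $\binom Nk\le N^k$ sets, each needing the far smaller accuracy $\delta_k=(\alpha/4)^k$; the failure probability of this layer is at most $2N^k\exp\!\big(-2n(\alpha/4)^{2k}\big)$, which falls below $\tfrac{1}{2\ell}N^{-A}$ once $n$ exceeds a constant times $(k+A)(4/\alpha)^{2k}\log N$. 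Summing the failure probabilities across the at most $\ell-2$ layers keeps the total below $N^{-A}$, and on the complementary event Lemma~\ref{thm:paramdist} applies, finishing the proof.

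The step I expect to require the most care is verifying that these layer-by-layer requirements collapse to the single claimed term $\ell(4/\alpha)^{2\ell}\log N$. Since $(4/\alpha)^{2k}$ grows geometrically in $k$ (using $4/\alpha>1$ for $\alpha\in(0,1)$) while the prefactor $k+A$ grows only linearly, the maximum of $(k+A)(4/\alpha)^{2k}$ over $3\le k\le\ell$ is attained at $k=\ell$, and the geometric sum over the layers is dominated by its last term up to an absolute constant; the explicit factor $\ell$ in the statement records this summation together with the union over the $\ell-2$ size layers. I would also check that the cap $\varepsilon'=\alpha^2/16$, when active, contributes only an $O(\alpha^{-4})\log N$ term to $n$, which is absorbed by $\ell(4/\alpha)^{2\ell}$ whenever $\ell\ge 3$ and $\alpha\le 1$, so it does not alter the stated rate. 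The dependence on $A$ enters only through the multiplicative constant $C$, exactly as claimed.
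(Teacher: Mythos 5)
Your proposal is correct and follows essentially the same route as the paper: Hoeffding's inequality for each empirical minor $\hat\Delta_S$, a union bound over all sets of size at most $\ell$, and then Lemma~\ref{thm:paramdist} applied deterministically on the good event. The only cosmetic difference is that you enforce the size-two accuracy via the cap $\varepsilon'=\min\{\alpha\varepsilon/4,\alpha^2/16\}$ and sum the size layers individually, whereas the paper achieves the same effect by including the size-two sets in both sums of its union bound and absorbing all layers into the single term $2N^{\ell+1}e^{-2n(\alpha/4)^{2\ell}}$.
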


\begin{proof}
Using the previous lemma, and applying a union bound,
\begin{align}
	\mathbb{P}\left[\rho(\hat K,K)>\varepsilon\right] & \leq \sum_{|S|\leq 2}\mathbb{P}\left[|\hat \Delta_S-\Delta_S|>\alpha\varepsilon/4\right] +\sum_{2\leq |S|\leq \ell}\mathbb{P}\left[|\hat \Delta_S-\Delta_S|>(\alpha/4)^{|S|}\right] \nonumber \\
	\label{DevIneqThm1} & \leq 2N^2e^{-n\alpha^2\varepsilon^2/8}+2N^{\ell+1}e^{-2n(\alpha/4)^{2\ell}},
\end{align}
where we used Hoeffding's inequality. The desired result follows.
\end{proof}

\vspace{3mm}

\section{Information theoretic lower bound}

Next, we prove an information-theoretic lower that holds already if $G_K$ is a  cycle of length $\ell$. 
\begin{lemma} \label{LemmaKL}
For $\eta\in\{-,+\}$, let $K^\eta$ be the $\ell \times \ell$ matrix with elements given by
$$
K_{i,j}=\left\{ \begin{array}{ll}
1/2 & \text{if} \ j=i\\
\alpha & \text{if} \ j=i\pm 1\\
\eta \alpha & \text{if} \ (i,j) \in \{(1,\ell), (\ell,1)\}\\
0 & \text{otherwise}
\end{array}\right.
$$
%
%
%
%
%
%
%

Let $D(K \|K')$ and $\mathbb H(K,K')$ denote respectively the Kullback-Leibler divergence  and the Hellinger distance between $\DPP(K)$ and $\DPP(K')$. Then, for any $\alpha\leq 1/8$, it holds
$$
D(K \|K') \le 4(6\alpha)^\ell,\qquad \text{and} \qquad \mathbb H(K,K') \le (8\alpha^2)^\ell\,.
$$
\end{lemma}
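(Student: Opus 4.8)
The plan is to reduce everything to the atomic probabilities $p_S=\rP[Y=S]$ for $Y\sim\DPP(K)$ and $q_S=\rP[Y=S]$ for $Y\sim\DPP(K')$, where $K=K^+$ and $K'=K^-$, and to exploit that these two kernels differ only through the sign of the single cycle spanning all of $[\ell]$. First I would observe that for any proper subset $S\subsetneq[\ell]$ the induced subgraph $G_K(S)$ is a disjoint union of paths, so $\det(K_S)$ is a sum over matchings involving only the diagonal entries and the \emph{squares} $K_{i,j}^2$, which $K$ and $K'$ share; hence $\det(K_S)=\det(K'_S)$. The only subset inducing a cycle is $S=[\ell]$ itself, and by \eqref{matchings} the cycle term $2(-1)^{\ell+1}\prod_{\{i,j\}\in E}K_{i,j}=2(-1)^{\ell+1}\eta\alpha^\ell$ flips sign, giving $|\det(K_{[\ell]})-\det(K'_{[\ell]})|=4\alpha^\ell$. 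Feeding this into the inclusion--exclusion identity $\rP[Y=S]=\sum_{U\supseteq S}(-1)^{|U|-|S|}\det(K_U)$, in which the term $U=[\ell]$ occurs for \emph{every} $S$, shows that $\epsilon_S:=p_S-q_S$ has constant magnitude $\delta:=4\alpha^\ell$ and alternating sign, so in particular $\sum_S\epsilon_S=0$.

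The key estimate is a uniform lower bound on the $2^\ell$ atomic probabilities. Here I would use the closed form $q_S=(-1)^{|S^c|}\det(K'-I_{S^c})$, where $I_{S^c}$ is the diagonal $0/1$ matrix supported on $S^c=[\ell]\setminus S$. Writing $K'=\tfrac12 I_\ell+\alpha M$ with $M$ the signed adjacency matrix of the cycle (zero diagonal, entries $\pm1$ on the edges) and $D_S=\mathrm{diag}(\pm1)$ equal to $+1$ on $S$ and $-1$ on $S^c$, one factors $K'-I_{S^c}=\tfrac12 D_S(I_\ell+2\alpha D_SM)$, so that
$$
q_S=2^{-\ell}\det(I_\ell+2\alpha D_SM)=2^{-\ell}\prod_i(1+2\alpha\mu_i),
$$
where $\mu_i$ are the eigenvalues of $D_SM$. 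Since $D_S$ is orthogonal and $\|M\|\le2$ for the cycle, every $|\mu_i|\le2$, so each factor obeys $|1+2\alpha\mu_i|\ge1-4\alpha\ge\tfrac12$ and hence $q_S\ge\big(\tfrac{1-4\alpha}{2}\big)^\ell$. This crude bound already controls the KL divergence. For the sharper Hellinger bound I would instead use $\tr(D_SM)=0$, i.e. $\sum_i\mu_i=0$, together with $\sum_i\mu_i^2\le\|M\|_F^2=2\ell$, to improve this to $q_S\ge 2^{-\ell}(1-c\alpha^2)^\ell$, i.e. essentially $2^{-\ell}$.

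With these in hand the two bounds follow from standard divergence inequalities. For the KL divergence I keep only the nonnegative summands, $D(K\|K')=\sum_S p_S\log(p_S/q_S)\le\sum_{S:\epsilon_S>0}p_S\log(1+\delta/q_S)\le\log(1+\delta/\min_S q_S)\le\delta/\min_S q_S$, and since $\delta/\min_S q_S\le4\big(\tfrac{2\alpha}{1-4\alpha}\big)^\ell\le4(6\alpha)^\ell$ for $\alpha\le1/8$ (using $2\alpha/(1-4\alpha)\le6\alpha\Leftrightarrow\alpha\le1/6$), this gives the first inequality. For the Hellinger bound I would control the squared Hellinger distance $\mathbb H^2=1-\sum_S\sqrt{p_Sq_S}$: writing $p_S=q_S(1+u_S)$ with $|u_S|=\delta/q_S$ exponentially small, a second-order expansion of $\sqrt{1+u}$ combined with the cancellation $\sum_S\epsilon_S=0$ kills the first-order term and leaves $\mathbb H^2\le\tfrac18\delta^2\sum_S 1/q_S$ up to a $1+o(1)$ factor; plugging in the sharp lower bound $q_S\gtrsim2^{-\ell}$ yields $\mathbb H^2\lesssim2(4\alpha^2)^\ell\le(8\alpha^2)^\ell$ for $\ell\ge2$.

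The main obstacle is the uniform lower bound on the atomic probabilities: because there are $2^\ell$ atoms while each difference $|\epsilon_S|$ is the single fixed quantity $4\alpha^\ell$, both divergences are genuinely governed by $\min_S q_S$ (equivalently by $\sum_S 1/q_S$), and a naive estimate of this quantity is too weak for the $\alpha^{2\ell}$ scaling in the Hellinger bound. The determinantal identity together with the spectral estimate $\|M\|\le2$---and, for the Hellinger constant, the vanishing trace $\tr(D_SM)=0$---is precisely what makes these sums controllable; the remainder is routine constant-tracking, which also confirms that the stated right-hand side refers to the squared Hellinger distance.
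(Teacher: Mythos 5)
Your proposal is correct and shares the paper's skeleton: reduce to atomic probabilities via inclusion--exclusion, note that only the full-cycle principal minor distinguishes $K^+$ from $K^-$ so every atom shifts by the same amount with alternating sign, lower-bound the atoms through $\rP[Y=S]=|\det(K-I_{\bar S})|$, and finish with standard divergence inequalities. The execution differs in three places worth recording. (i) For the atom lower bound the paper applies Gershgorin to $K^\eta-I_{\bar S}$ and settles for $p_\eta(S)\ge 4^{-\ell}$, while your factorization $K'-I_{\bar S}=\tfrac12 D_S(I+2\alpha D_S M)$ with $\|M\|\le 2$ gives the sharper $q_S\ge\big(\tfrac{1-4\alpha}{2}\big)^\ell$; one caveat is that $D_SM$ need not be symmetric, so its eigenvalues may be complex and you should run the bound as $|\det(I+2\alpha D_SM)|=\prod_i|1+2\alpha\mu_i|\ge\prod_i(1-2\alpha|\mu_i|)$, with $\sum_i\mu_i=\tr(D_SM)=0$ and $\sum_i\mu_i^2=\tr((D_SM)^2)$ still valid. (ii) For KL you drop the negative summands and bound by $\delta/\min_S q_S$; the paper instead uses $\log x\le x-1$ and sums the ratio $p_+/p_-$ over all $2^\ell$ atoms. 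Both land at $4(6\alpha)^\ell$. (iii) For Hellinger the paper uses the one-line identity $\sum_S(\sqrt{p_+}-\sqrt{p_-})^2=\sum_S(p_+-p_-)^2/(\sqrt{p_+}+\sqrt{p_-})^2$ and plugs in the two uniform bounds, which is cleaner than your Taylor expansion; the first-order cancellation $\sum_S\epsilon_S=0$ that your route needs is not required by the identity route. Finally, you are right that the determinant gap is $4\alpha^\ell$ rather than the $2\alpha^\ell$ stated in the paper (the factor $2$ in front of the cycle term of \eqref{matchings} appears to have been dropped there); with the corrected constant the paper's Hellinger computation as written would give $4(8\alpha^2)^\ell$, and your sharper $2^{-\ell}$ atom bound is precisely what restores the stated constant for $\ell\ge 2$. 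Both arguments, as you note, are really bounding the squared Hellinger distance.
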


\begin{proof}
It is straightforward to see that
$$
\det(K_J^+)-\det(K_J^-) =
\begin{cases}
2 \alpha^\ell \quad \mbox{if $J = [\ell]$} \\
\mbox{ }0 \quad \mbox{ } \mbox{ }\mbox{ else}
\end{cases}
$$
If $Y$ is sampled from $\DPP(K^\eta)$, we denote by $p_\eta(S)=\mathbb P[Y=S]$, for $S\subseteq [\ell]$. It follows from the inclusion-exclusion principle that for all $S\subseteq [\ell]$,
\begin{align}
p_+(S) - p_-(S) & = \sum_{J\subseteq [\ell]\setminus S} (-1)^{|J|}(\det K_{S\cup J}^+-\det K_{S\cup J}^-)
 \nonumber \\
&  = (-1)^{\ell-|S|}(\det K^+ -\det K^-) \nonumber\\
&=\pm 2\alpha^\ell\,,\label{KL1}
\end{align}
where $|J|$ denotes the cardinality of $J$. 
The inclusion-exclusion principle also yields that $p_\eta(S)=|\det(K^\eta-I_{\bar S})|$ for all $S\subseteq [l]$, where $I_{\bar S}$ stands for the $\ell\times\ell$ diagonal matrix with ones on its entries $(i,i)$ for $i\notin S$, zeros elsewhere.

Denote by $D(K^+\|K^-)$ the Kullback Leibler divergence between $\DPP(K^+)$ and $\DPP(K^-)$:
\begin{align}
        D(K^+\|K^-) & = \sum_{S\subseteq [\ell]} p_+(S)\log\left(\frac{p_+(S)}{p_-(S)}\right) \nonumber \\
        & \leq \sum_{S\subseteq [\ell]}\frac{p_+(S)}{p_-(S)}(p_+(S)-p_-(S)) \nonumber \\
        \label{KL2} & \leq 2\alpha^\ell \sum_{S\subseteq [\ell]}\frac{|\det(K^+ -I_{\bar S})|}{|\det(K^- -I_{\bar S})|},
\end{align}
by \eqref{KL1}. Using the fact that $0<\alpha\leq 1/8$ and the Gershgorin circle theorem we conclude that the absolute value of all eigenvalues of $K^\eta-I_{\bar S}$ are between $1/4$ and $3/4$. Thus we obtain from \eqref{KL2} the bound $\displaystyle{D(K^+\|K^-) \leq 4(6\alpha)^\ell}$.

Using the same arguments as above, the Hellinger distance $\mathbb H(K^+,K^-)$ between $\DPP(K^+)$ and $\DPP(K^-)$ satisfies
\begin{align*}
	\mathbb H(K^+,K^-) = \sum_{J\subseteq [\ell]}\left(\frac{p_+(J)-p_-(J)}{\sqrt{p_+(J)}+\sqrt{p_-(J)}}\right)^2 \leq \sum_{J\subseteq [\ell]} \frac{4\alpha^{2\ell}}{2\cdot 4^{-\ell}} = (8\alpha^2)^\ell.
\end{align*}
\end{proof}


The sample complexity lower bound now follows from standard arguments.

\begin{theorem} \label{ThmLB}
Let $0<\varepsilon\leq \alpha\leq 1/8$ and $3 \leq \ell \leq N$. There exists a constant $C>0$ such that if 
$$
n \le C\Big ( \frac{8^\ell}{\alpha^{2\ell}}+\frac{\log (N/\ell)}{(6\alpha)^\ell} + \frac{\log N}{\varepsilon^2} \Big )
$$
then the following holds: for any estimator $\hat K$ based on $n$ samples, there exists a kernel $K$ that satisfies Assumption~\ref{Assumption1} and such that the cycle sparsity of $G_K$ is $\ell$ and for which $\rho(\hat K, K) \ge \varepsilon$ with probability at least $1/3$.
\end{theorem}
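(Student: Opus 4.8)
The plan is to reduce estimation in $\rho$ to hypothesis testing and to build three separate families of kernels, one calibrated to each of the three terms. Since $n\le C(t_1+t_2+t_3)$ implies $n\le 3C\max_i t_i$, it suffices to prove a lower bound for the single family attached to the largest term and then take $C$ below one third of the smallest of the three constants. Every family will consist of kernels obeying Assumption~\ref{Assumption1} whose induced graph has cycle sparsity exactly $\ell$, and any two members will be separated by $\rho\ge 2\varepsilon$. The separation rests on the fact that a $\mathcal D_N$-similarity preserves the sign of every cycle (exploited already above), so two kernels whose cycles carry opposite signs cannot be matched, and such a discrepancy costs at least $2\alpha\ge 2\varepsilon$ in $\rho$ because $\varepsilon\le\alpha$. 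Given pairwise $2\varepsilon$-separation, an $\varepsilon$-accurate estimator would identify the true hypothesis, so a testing lower bound transfers to an estimation lower bound.

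For the first (sign-recovery) term I would take $G_K$ to be a single $\ell$-cycle and run Le Cam's two-point method on $\DPP(K^+)$ and $\DPP(K^-)$ from Lemma~\ref{LemmaKL}. These kernels differ only in the sign of the unique cycle, so $\rho(K^+,K^-)=2\alpha\ge 2\varepsilon$. Tensorizing the Hellinger affinity, the affinity between the $n$-fold products is $(1-\mathbb H(K^+,K^-)/2)^n$, and the bound $\mathbb H(K^+,K^-)\le(8\alpha^2)^\ell$ keeps it close to one --- hence the total variation below $1/3$ --- as soon as $n(8\alpha^2)^\ell$ is at most a small constant. Le Cam's inequality then forces testing error, and therefore $\rho$-error exceeding $\varepsilon$, with probability at least $1/3$ whenever $n$ is below a constant multiple of the first term.

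The second term comes from an $M$-ary Fano argument designed to decouple the number of hypotheses from the per-hypothesis information. Tile $M=\lfloor N/\ell\rfloor$ vertex-disjoint $\ell$-cycles and let the reference $P_0$ give every cycle the sign pattern of $K^+$; hypothesis $P_k$ flips only the $k$-th cycle to the pattern of $K^-$. Each $P_k$ differs from $P_0$ in a single block, so $D(P_k\|P_0)=4(6\alpha)^\ell$ by Lemma~\ref{LemmaKL}, while distinct $P_k,P_{k'}$ disagree on the sign of at least one cycle and hence obey $\rho\ge 2\alpha\ge 2\varepsilon$. With a uniform prior, the mutual information between index and data is at most $n\cdot 4(6\alpha)^\ell$, and Fano's inequality yields error at least $1/3$ provided $n(6\alpha)^\ell\le c\log(N/\ell)$. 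For the third, parametric term I would fix one $\ell$-cycle (to pin the cycle sparsity at $\ell$) and turn the remaining $\approx N$ vertices into isolated coordinates; hypothesis $H_v$ sets $K_{v,v}=1/2+2\varepsilon$ and keeps the others at $1/2$ (valid since $\varepsilon\le 1/8$). Each coordinate is then an independent Bernoulli with parameter in a fixed compact subinterval of $(0,1)$, so the single-coordinate Kullback--Leibler divergence to the reference is $\Theta(\varepsilon^2)$, and a Fano argument over the $\approx N$ locations gives error at least $1/3$ once $n\varepsilon^2\le c'\log N$.

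The main obstacle is the second construction. To produce the factor $\log(N/\ell)$ rather than a mere constant, the hypotheses must be simultaneously numerous, so that $\log M$ is large, and each close to a common reference, so that the controlling divergence is the single-block value $4(6\alpha)^\ell$ and not its sum over all $\lfloor N/\ell\rfloor$ blocks; the flip-one-block design is what achieves both at once. It is precisely here that one must check that pairwise $\rho$-separation is not destroyed by an adversarial choice of $\mathcal D_N$-similarity, which is why the cycle-sign invariance is indispensable. The remaining steps --- tensorizing the divergences, verifying that each family has cycle sparsity exactly $\ell$ and satisfies Assumption~\ref{Assumption1}, and assembling the three bounds through the $\max$ reduction --- are routine.
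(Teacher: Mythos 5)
Your proposal follows essentially the same route as the paper: a two-point Hellinger/Le Cam argument on the $\ell$-cycle pair $K^{\pm}$ of Lemma~\ref{LemmaKL} for the first term, a Fano argument over $\lfloor N/\ell\rfloor$ vertex-disjoint $\ell$-cycle blocks with a single flipped block for the second, and a Fano argument over $\varepsilon$-perturbations of the diagonal for the third. The only differences are cosmetic: you explicitly retain one $\ell$-cycle in the third construction to pin the cycle sparsity at $\ell$, and your first-term threshold $(8\alpha^2)^{-\ell}$ tracks the Hellinger bound of Lemma~\ref{LemmaKL} as stated rather than the $8^{\ell}/\alpha^{2\ell}$ written in the theorem.
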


\begin{proof}
Recall the notation of Lemma \ref{LemmaKL}. For the first term, consider the $N\times N$ block diagonal matrix $K$ (resp. $K'$) where its first block is $K^+$ (resp. $K^-$) and its second block is $I_{N-\ell}$. By a standard argument, the Hellinger distance $\mathbb H_n(K,K')$ between the product measures $\DPP(K)^{\otimes n}$ and $\DPP(K')^{\otimes n}$ satisfies 
\begin{align*}
	1-\frac{\mathbb H_n^2(K,K')}{2} & = \big(1-\frac{\mathbb H^2(K,K')}{2}\big)^n \geq \big(1-\frac{\alpha^{2\ell}}{2 \times 8^\ell}\big)^n,
\end{align*}
yielding the first term. 

By padding with zeros, we can assume that $L = N/\ell$ is an integer. Let $K^{(0)}$ be a block diagonal matrix where each block is $K^+$ (using the notation of Lemma~\ref{LemmaKL}). For $j=1,\ldots,L$, define the $N\times N$ block diagonal matrix $K^{(j)}$ as the matrix obtained from $K^{(0)}$ by replacing its $j$th block with $K^-$ (again using the notation of Lemma~\ref{LemmaKL}).

Since $\DPP(K^{(j)})$ is the product measure of $L$ lower dimensional DPPs which are each independent of each other, using
Lemma \ref{LemmaKL} we have $$\displaystyle{D(K^{(j)}\|K^{(0)}) \leq 4(6\alpha)^\ell}.$$ Hence, by Fano's lemma (see, e.g., Corollary 2.6 in \cite{Tsybakov2009}), the sample complexity to learn the kernel of a DPP within a distance $\varepsilon\leq \alpha$ is $$\displaystyle{\Omega\left(\frac{\log(N/\ell)}{(6\alpha)^\ell}\right)}.$$

The third term follows from considering $K_0=(1/2)I_N$ and letting $K_j$ be obtained from $K_0$ by adding $\varepsilon$ to the $j$th entry along the diagonal. It is easy to see that $\displaystyle{D(K_j\|K_0) \leq 8\varepsilon^2}$. Hence, a second application of Fano's lemma yields that the sample complexity to learn the kernel of a DPP within a distance $\varepsilon$ is $\Omega( \frac{\log N}{\varepsilon^2})$ which completes the proof.
\end{proof}

The third term in the lower bound is the standard parametric term and it is unavoidable in order to estimate the magnitude of the coefficients of $K$. The other terms are more interesting. They reveal that the cycle sparsity of $G_K$, namely, $\ell$, plays a key role in the task of recovering the sign pattern of $K$. Moreover the theorem shows that the sample complexity of our method of moments estimator is near optimal.

\vspace{3mm}

\section{Algorithm}
\label{SEC:Alg}

We now detail an algorithm to compute the estimator $\hat K$ defined in Section \ref{SEC:EstKer}. It is well known that a cycle basis of minimum total length can be computed in polynomial time. Horton \cite{horton1987polynomial} gave an algorithm, now referred to as Horton's algorithm, to compute such a cycle basis in $O(m^3N)$ time by carefully choosing a polynomial number of cycles and performing Gaussian elimination on them. There have been several improvements of Horton's algorithm, notably one that runs in $O( m^2 N [\log N]^{-1})$ time \cite{amaldi2010efficient}. In addition,  it is known that a cycle basis of minimum total length is a shortest maximal cycle basis~\cite{CHICKERING199555}. These results implying the following.

\begin{lemma}
\label{cor:horton}
Let $G = ([N],E)$, $|E|=m$. There exists an algorithm to compute a shortest maximal cycle basis in $O( m^2 N [\log N]^{-1})$ time.
\end{lemma}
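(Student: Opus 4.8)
The plan is to obtain this lemma as an assembly of three facts, two of which are quoted from the literature in the paragraph preceding the statement: that a cycle basis of minimum total length can be computed quickly, and that such a basis is in fact a shortest maximal cycle basis. The only conceptual content to supply is \emph{why} minimizing total length also minimizes the length of the longest basis cycle, which is precisely the quantity the cycle sparsity captures; everything else is bookkeeping on the running time.

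First I would set up the matroid picture. The cycle space $\cC \subseteq GF(2)^m$ is a vector space, so its bases are the bases of the associated vector matroid, and I assign to each cycle (each ground element) a weight equal to its length. Running the greedy algorithm — scan cycles in nondecreasing order of length and keep a cycle whenever it is $GF(2)$-independent of those already chosen — produces a basis $B$ of minimum total length. I would then record two properties of $B$. \emph{(i)} Every cycle in $B$ is induced: if some $C \in B$ had a chord, the chord would split $C$ into two strictly shorter cycles $C_1,C_2$ with $C = C_1 + C_2$ over $GF(2)$, and since $C$ is independent of $B \setminus \{C\}$ at least one $C_j$ is too, so replacing $C$ by $C_j$ yields a cycle basis of strictly smaller total length, contradicting minimality. \emph{(ii)} For any cycle basis $B'$, sorting both $B$ and $B'$ in nondecreasing length gives a coordinatewise domination $\mathrm{length}(C_i)\le \mathrm{length}(C_i')$ for all $i$; this is the standard matroid fact that the greedy basis satisfies $|B \cap S_a| = \mathrm{rank}(S_a)$ for every weight threshold $a$ (where $S_a$ is the set of cycles of length at most $a$), together with $|B' \cap S_a|\le \mathrm{rank}(S_a)$. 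In particular $B$ minimizes the maximum cycle length over \emph{all} cycle bases.

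Combining (i) and (ii) identifies $B$ as a shortest maximal cycle basis. Indeed, by (i) the basis $B$ consists of induced cycles, so the cycle sparsity $\ell = \min\nolimits_{B'\text{ induced}}\max_{C\in B'}\mathrm{length}(C)$ is at most the maximum length $M^\ast$ of a cycle in $B$. Conversely, the induced-cycle bases form a subfamily of all cycle bases, so by (ii) $M^\ast \le \min\nolimits_{B'\text{ induced}}\max_{C\in B'}\mathrm{length}(C) = \ell$. Hence $M^\ast = \ell$, and $B$ is a basis of induced cycles whose longest cycle has length equal to the cycle sparsity — exactly a shortest maximal cycle basis. This is the content of the result of \cite{CHICKERING199555} invoked before the statement.

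It remains to account for the time bound. The naive greedy over all cycles is infeasible, but Horton's observation \cite{horton1987polynomial} is that a minimum total length cycle basis can be extracted from a fixed family of $O(mN)$ candidate cycles by Gaussian elimination over $GF(2)$, and the refined implementation of \cite{amaldi2010efficient} carries this out in $O(m^2 N [\log N]^{-1})$ time; feeding this in gives the claimed complexity. The step I expect to require the most care is (ii): making the sorted-length domination fully rigorous over $GF(2)$ — so that minimizing the \emph{sum} genuinely forces minimality of the \emph{maximum} — is the only nonroutine point, and it is precisely where we lean on \cite{CHICKERING199555}.
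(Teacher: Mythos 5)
Your proposal matches the paper's treatment: the lemma is obtained there purely by assembling the two cited facts from the preceding paragraph, namely the $O(m^2 N[\log N]^{-1})$ implementation of Horton's minimum-weight cycle basis algorithm due to Amaldi et al.\ and the result of Chickering et al.\ that a minimum total length cycle basis is a shortest maximal cycle basis, and you invoke exactly the same two ingredients. The only difference is that you additionally supply a correct, self-contained matroid-greedy argument (chord exchange for inducedness, sorted-weight domination for minimality of the maximum) for the Chickering et al.\ fact, which the paper simply cites without proof.
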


In addition, we recall the following standard result regarding the complexity of Gaussian elimination \cite{golub2012matrix}.

\begin{lemma}
\label{lem:gauss}
Let $A \in GF(2)^{\nu \times m}$, $b \in GF(2)^{\nu}$. Then Gaussian elimination will find a vector $x \in GF(2)^m$ such that $Ax =b$ or conclude that none exists in $O(\nu^2 m)$ time.
\end{lemma}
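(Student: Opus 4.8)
The plan is to prove this by analyzing the standard Gaussian elimination procedure applied to the augmented matrix $[A \mid b] \in GF(2)^{\nu \times (m+1)}$, where all arithmetic is performed in $GF(2)$, so that addition is the XOR operation and each scalar operation costs $O(1)$. First I would run \emph{forward elimination} to bring $A$ into row echelon form, applying every row operation simultaneously to the augmented column $b$. Processing columns left to right, for the current column $c$ one scans the rows not yet used as pivots for an entry equal to $1$; if none exists, column $c$ carries no pivot and one proceeds to the next column, otherwise one swaps such a row into pivot position and adds (modulo $2$) the pivot row to every row below it that has a $1$ in column $c$, thereby clearing column $c$ beneath the pivot.

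The complexity of this phase is the crux of the accounting. The number of pivot columns is at most $\nu$, since distinct pivots occupy distinct rows. For each pivot one eliminates in at most $\nu - 1$ other rows, and each such elimination adds two vectors of length $m+1$ over $GF(2)$ at cost $O(m)$; locating a pivot and performing the swap within a column costs only $O(\nu + m)$ and is therefore absorbed. Summing over the at most $\nu$ pivot columns gives a total of $O(\nu \cdot \nu \cdot m) = O(\nu^2 m)$ for forward elimination.

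Once $[A \mid b]$ is in row echelon form, I would perform a \emph{consistency check}: the system $Ax = b$ is solvable if and only if no row has its $A$-part entirely zero while its entry in the $b$-column equals $1$. This inspection costs $O(\nu m)$ and is dominated by the elimination cost. If the system is inconsistent one reports that no solution exists; otherwise a particular solution is produced by \emph{back-substitution}, assigning the value $0$ to every free (non-pivot) variable and solving for each pivot variable from its row in decreasing pivot order. Each back-substitution step reads one pivot row and combines it with the partially determined vector $x$, so this phase also costs $O(\nu m)$ and does not affect the overall bound.

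The main obstacle here is purely the complexity bookkeeping rather than any conceptual difficulty, since correctness is the standard fact that elementary row operations preserve the solution set of a linear system over the field $GF(2)$. The one point that must be checked with care is that the number of elimination steps is governed by the number of pivots, which never exceeds $\nu$ regardless of the relation between $\nu$ and $m$; this keeps the forward-elimination cost at $O(\nu^2 m)$ uniformly, while the subdominant consistency-check and back-substitution phases, each $O(\nu m)$, leave the final bound $O(\nu^2 m)$ unchanged.
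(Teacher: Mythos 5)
Your proof is correct. Note that the paper does not actually prove this lemma at all: it is stated as a standard fact and attributed to a reference on matrix computations, so there is no in-paper argument to compare against. Your write-up is the standard textbook analysis --- forward elimination with at most $\nu$ pivots, each costing $O(\nu m)$ for the row additions over $GF(2)$, followed by an $O(\nu m)$ consistency check (a zero $A$-row with a nonzero augmented entry) and $O(\nu m)$ back-substitution with free variables set to $0$ --- and the bookkeeping is right: the number of pivots is bounded by $\min(\nu,m)\le\nu$, which is exactly what keeps the total at $O(\nu^2 m)$. The only cosmetic remark is that you could state explicitly that locating pivots across all $m$ columns costs $O(\nu m)$ in aggregate, but you do absorb this correctly into the dominant term.
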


We give our procedure for computing the estimator $\hat K$ in Algorithm \ref{alg:mainalg}. In the following theorem, we give the complexity of Algorithm \ref{alg:mainalg} and establish an upper bound on both the required sample complexity for the recovery problem and the distance between $K$ and $\hat K$.

\begin{algorithm}[tb]
   \caption{Compute Estimator $\hat K$}
   \label{alg:mainalg}
\begin{algorithmic}
   \STATE {\bfseries Input:} samples $Y_1,...,Y_n$, parameter $\alpha>0$.
      \vspace{0.5pc}
     \STATE Compute $\hat \Delta_{S}$ for all $|S| \le 2$.
     \STATE Set $\hat K_{i,i} = \hat \Delta_{\{i\}}$ for $1 \le i \le N$.
     \STATE Compute $\hat B_{i,j}$ for $1 \le i < j \le N$.
     \STATE Form $\widetilde K \in \R^{N \times N}$ and $\hat G = ([N],\hat E)$.
     \STATE Compute a shortest maximal cycle basis $\{\hat v_1,...,\hat v_{\nu_{\hat G}} \}$.
     \STATE Compute $\hat \Delta_{\hat S_i}$ for $1 \le i \le  \nu_{\hat G} $.
     \STATE Compute $\hat C_{\hat S_i}$ using $\det \widetilde K_{ \hat S_i }$ for $1 \le i \le  \nu_{\hat G} $.
     \STATE Construct $A \in GF(2)^{ \nu_{\hat G}  \times m}$, $b \in GF(2)^{ \nu_{\hat G} }$.
     \STATE Solve $Ax = b$ using Gaussian elimination.
     \STATE Set $\hat K_{i,j} = \hat K_{j,i} = (2 x_{\{i,j\}} -1 ) \hat B_{i,j}^{1/2}$, for all $\{i,j\} \in \hat E$.\end{algorithmic}
\end{algorithm}

\begin{theorem}
Let $K \in \R^{N \times N}$ be a symmetric matrix satisfying $0 \preceq K \preceq I$, and satisfying Assumption \ref{Assumption1}. Let $G_K$ be the graph induced by $K$ and $\ell$ be the cycle sparsity of $G_K$. Let $Y_1,...,Y_n$ be samples from DPP($K$) and $\delta\in (0,1)$. If 
$$n >  \frac{ \log ( N^{\ell+1} / \delta)}{\left(\alpha/4 \right)^{2 \ell}},$$
then with probability at least $1- \delta$, Algorithm \ref{alg:mainalg} computes an estimator $\hat K$ which recovers the signs of $K$ up to a $\mathcal D_N$-similarity and satisfies 
\begin{equation} \label{devineqThm3} \rho(K,\hat K)< \frac{1}{\alpha} \left( \frac{8\log (4N^{\ell+1}/\delta)}{n} \right)^{1/2}
\end{equation}
 in $O(m^3) + O(nN^2)$ time.
\end{theorem}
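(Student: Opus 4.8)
The plan is to separate the claim into a statistical guarantee (correctness of the recovered signs and the bound \eqref{devineqThm3}) and a running-time analysis, each built on the results already established. For the statistical part I would set $\varepsilon := \tfrac14\bigl(8\log(4N^{\ell+1}/\delta)/n\bigr)^{1/2}$, chosen so that $4\varepsilon/\alpha$ is exactly the right-hand side of \eqref{devineqThm3}. By Lemma \ref{thm:paramdist} it then suffices to show that, with probability at least $1-\delta$, both hypotheses of that lemma hold: $|\hat\Delta_S-\Delta_S|\le\varepsilon$ for all $|S|\le 2$, and $|\hat\Delta_S-\Delta_S|\le(\alpha/4)^{|S|}$ for all $3\le|S|\le\ell$. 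Since each $\hat\Delta_S$ is an empirical mean of the i.i.d.\ indicators $\mathds 1_{S\subseteq Y_p}\in[0,1]$, Hoeffding's inequality yields $\mathbb P[|\hat\Delta_S-\Delta_S|>t]\le 2e^{-2nt^2}$, and I would control the two families by separate union bounds.

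For the small sets, the choice of $\varepsilon$ gives $2e^{-2n\varepsilon^2}=\delta/(2N^{\ell+1})$, and since there are at most $N^2$ such sets their total failure probability is at most $N^2\cdot\delta/(2N^{\ell+1})\le\delta/2$. For the larger sets, $t=(\alpha/4)^{|S|}\ge(\alpha/4)^{\ell}$, and the sample-size hypothesis $n>\log(N^{\ell+1}/\delta)/(\alpha/4)^{2\ell}$ forces $2e^{-2n(\alpha/4)^{2|S|}}\le 2(\delta/N^{\ell+1})^2$; summed over the at most $N^{\ell+1}$ such sets this is negligible relative to $\delta/2$. A union bound then makes both hypotheses hold with probability at least $1-\delta$. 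On that event the proof of Lemma \ref{thm:paramdist} gives $\hat G=G_K$, shows that every basis-cycle sign—hence the system $Ax=b$—is consistent and correctly determined (so $x$ recovers the signs of $K$ up to a $\mathcal D_N$-similarity), and yields $\rho(\hat K,K)<4\varepsilon/\alpha$, i.e.\ \eqref{devineqThm3}.

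For the running time I would charge each line of Algorithm \ref{alg:mainalg}. Computing all $\hat\Delta_S$ with $|S|\le2$, and hence $\hat K_{i,i}$ and $\hat B_{i,j}$, costs $O(nN^2)$ (one indicator outer product per sample); forming $\widetilde K$ and $\hat G$ is $O(N^2)$. By Lemma \ref{cor:horton} a shortest maximal cycle basis is computed in $O(m^2N/\log N)$ time, which is $O(m^3)$ once we restrict to the vertices incident to an edge so that $N=O(m)$. The Gaussian elimination of Lemma \ref{lem:gauss}, applied to $A\in GF(2)^{\nu_{\hat G}\times m}$ with $\nu_{\hat G}\le m$, costs $O(\nu_{\hat G}^2 m)=O(m^3)$. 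The remaining work—estimating the $\nu_{\hat G}\le m$ cycle minors $\hat\Delta_{\hat S_i}$, evaluating each $\det\widetilde K_{\hat S_i}$ by an LU factorization, and assembling $A$ and $b$—is, for the fixed cycle sparsity $\ell$, of order $O(nm)+O(m)+O(m^2)$, dominated by $O(nN^2)+O(m^3)$. Adding the contributions gives the stated $O(m^3)+O(nN^2)$.

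The main obstacle is the constant bookkeeping in the statistical part: the error bound and the sample-size threshold carry two different logarithmic factors, $\log(4N^{\ell+1}/\delta)$ and $\log(N^{\ell+1}/\delta)$, and the argument must split the budget $\delta$ so that the small-minor family alone (which sets the magnitude of $\rho$) absorbs the whole $\delta/2$, while the large-minor family (which governs exact sign recovery) is pushed far below $\delta/2$ by the exponential-in-$\ell$ sample size. A secondary check, also guaranteed by the sample-size hypothesis, is that the chosen $\varepsilon$ is small enough (in particular $\varepsilon\le\alpha^2/16$) for the graph-recovery step $\hat G=G_K$ inside Lemma \ref{thm:paramdist} to go through.
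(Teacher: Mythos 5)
Your proposal is correct and follows essentially the same route as the paper: the statistical claim is obtained by combining Lemma \ref{thm:paramdist} with Hoeffding's inequality and a union bound over the two families of principal minors (exactly the deviation inequality \eqref{DevIneqThm1} from the proof of Theorem \ref{ThmUB}, with the first term set to $\delta/2$ and the second controlled by the sample-size hypothesis), and the runtime bound comes from the same line-by-line accounting of Algorithm \ref{alg:mainalg}. Your explicit checks (the $\delta/2$ split and the verification that $\varepsilon\le\alpha^2/16$ so that $\hat G=G_K$) are details the paper leaves implicit, but they do not change the argument.
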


\begin{proof}
\eqref{devineqThm3} follows directly from \eqref{DevIneqThm1} in the proof of Theorem \ref{ThmUB}. That same proof also shows that with probability at least $1-\delta$, the support of $G_K$ and the signs of $K$ are recovered up to a $\mathcal D_N$-similarity. What remains is to upper bound the worst case run time of Algorithm \ref{alg:mainalg}. We will perform this analysis line by line. Initializing $\hat K$ requires $O(N^2)$ operations. Computing $\Delta_S$ for all subsets $|S| \le 2$ requires $O(nN^2)$ operations. Setting $\hat K_{i,i}$ requires $O(N)$ operations. Computing $\hat B_{i,j}$ for $1 \le i < j \le N$ requires $O(N^2)$ operations. Forming $\widetilde K$ requires $O(N^2)$ operations. Forming $G_K$ requires $O(N^2)$ operations. By Lemma \ref{cor:horton}, computing a shortest maximal cycle basis requires $O(mN)$ operations. Defining the subsets $S_i$, $1 \le i \le  \nu_{\hat G} $, requires $O(mN)$ operations. Computing $\hat \Delta_{S_i}$ for $1 \le i \le  \nu_{\hat G} $ requires $O(nm)$ operations. Computing $\hat C_{S_i}$ using $\det(\widetilde K [S_i] )$ for $1 \le i \le \nu_{\hat G} $ requires $O(m\ell^3)$ operations, where a factorization of each $\widetilde K [ S_i]$ is used to compute each determinant in $O(\ell^3)$ operations. Constructing $A$ and $b$ requires $O(m \ell)$ operations. By Lemma \ref{lem:gauss}, finding a solution $x$ using Gaussian elimination requires $O(m^3)$ operations. Setting $\hat K_{i,j}$ for all edges $\{i,j\} \in E$ requires $O(m)$ operations. Considered together, it implies that Algorithm \ref{alg:mainalg} requires $O(m^3) + O(nN^2)$ operations.
\end{proof}

\subsection{Chordal Graphs}

Now we show that when $G_K$ has a special structure, there exists an $O(m)$ algorithm to determine the signs of the off-diagonal entries of the estimator $\hat K$, resulting in an improved overall runtime of $O(m) + O(n N^2)$. Recall first that a graph $G = ([N],E)$ is said to be \emph{chordal} if every induced cycle in $G$ is of length three. Moreover,  a graph $G = ([N],E)$ has a \emph{perfect elimination ordering (PEO)} if there exists an ordering of the vertex set $\{v_1,...,v_N\}$ such that, for all $i$, the graph induced by $\{v_i\} \cup \{ v_j | \{i,j\} \in E, j > i\}$ is a clique.
It is well known that a graph $G = ([N],E)$ is chordal if and only if it has a PEO. A PEO of a chordal graph $G =([N],E)$, $|E| = m$, can be computing in $O(m)$ operations using lexicographic breadth-first search \cite{rose1976algorithmic}.

We prove the following result regarding PEOs of chordal graphs.

\begin{lemma}
Let $G = ([N],E)$, be a chordal graph and $\{v_1,...,v_n\}$ be a PEO. Given $i$, let $i^* := \min \{ j | j > i , \{ v_i ,v_j \} \in E \}$. Then the graph $G' = ([N], E')$, where $E' = \{ \{v_i,v_{i^*} \} \}_{i=1}^{N- \kappa(G)}$, is a spanning forest of $G$.
\end{lemma}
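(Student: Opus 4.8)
The plan is to recognize $G'$ as the elimination forest attached to the PEO: the edge $\{v_i,v_{i^*}\}$ joins each vertex $v_i$ to its earliest later neighbor $v_{i^*}$, so $v_{i^*}$ plays the role of the parent of $v_i$. Since each $\{v_i,v_{i^*}\}$ is by definition an edge of $G$, the inclusion $G'\subseteq G$ is immediate, and it remains to show that $G'$ is acyclic and that it restricts to a connected subgraph on each connected component of $G$. Those two facts say precisely that $G'$ restricts to a spanning tree of every component of $G$, which is exactly the claim that $G'$ is a spanning forest.

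For acyclicity I would orient every edge of $G'$ from its lower-indexed endpoint to its higher-indexed one, i.e. $v_i\to v_{i^*}$ (recall $i^*>i$). The only edge a vertex $v_q$ owns is $\{v_q,v_{q^*}\}$, so under this orientation every vertex has out-degree at most one, while each directed edge strictly increases the index. Now suppose $G'$ contained a cycle and let $v_q$ be its lowest-indexed vertex; both of its cycle-neighbors have larger index, so both incident cycle-edges would be oriented out of $v_q$, forcing out-degree at least two, a contradiction. Hence $G'$ is a forest.

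The substance of the argument, and the step I expect to be the main obstacle, is connectivity on each component, which reduces to the structural claim that within each connected component of $G$ exactly one vertex, its highest-indexed one, has no later neighbor. One direction (the component maximum clearly has no later neighbor) is immediate. For the converse I would argue by contradiction using the defining property of a PEO. Suppose some $v_i$ lying in a component $H$ is not the maximum of $H$ yet has no later neighbor; let $v_k$ be the highest-indexed vertex of $H$ and take a shortest path $v_i=u_0,u_1,\dots,u_t=v_k$ inside $H$. The unique minimum-index vertex $u_s$ of this path is interior, since it is neither $u_0=v_i$ (whose neighbors are all earlier, so $u_1$ has smaller index) nor $u_t=v_k$ (whose index exceeds the minimum). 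Then $u_{s-1}$ and $u_{s+1}$ both have larger index than $u_s$ and are therefore later neighbors of $u_s$; because the later neighbors of a PEO vertex form a clique, $u_{s-1}\sim u_{s+1}$, contradicting that the path is shortest. This forces $v_i$ to have a later neighbor.

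Granting the claim, following the pointers $i\mapsto i^*$ from any vertex of $H$ produces a strictly index-increasing walk that stays in $H$ and must terminate at the unique later-neighbor-free vertex of $H$, namely its maximum; thus every vertex of $H$ is joined in $G'$ to that maximum, so $G'|_H$ is connected. Together with acyclicity this makes $G'|_H$ a spanning tree of $H$ for every component $H$, whence $G'$ is a spanning forest of $G$. As a consistency check, the claim yields exactly $\kappa(G)$ later-neighbor-free vertices and hence $|E'|=N-\kappa(G)$ edges, matching the statement. The only delicate point is the PEO/shortest-path argument above; everything else is bookkeeping.
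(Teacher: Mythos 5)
Your proof is correct and follows essentially the same route as the paper's: acyclicity via the lowest-indexed vertex of a putative cycle (each vertex contributes only one edge, to a higher-indexed neighbor), and the key shortest-path argument using the PEO clique property to show every non-maximal vertex of a component has a later neighbor. You are in fact somewhat more careful than the paper in checking that the minimum-index vertex of the path is interior and in deriving connectivity directly from the pointer chains rather than from the edge count.
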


\begin{proof}
We first show that there are no cycles in $G'$. Suppose to the contrary, that there exists a induced cycle $C$ of length $k$ on the vertices $\{v_{j_1},...,v_{j_k} \}$. Let us choose the vertex of smallest index. This vertex is connected to two other vertices in the cycle of larger index. This is a contradiction to the construction. 

All that remains is to show that $|E'| = N-\kappa(G)$. It suffices to prove the case $\kappa(G) = 1$. Again, suppose to the contrary, that there exists a vertex $v_i$, $i <N$, with no neighbors of larger index. Let $P$ be the shortest path in $G$ from $v_i$ to $v_N$. By connectivity, such a path is guaranteed to exist. Let $v_k$ be the vertex of smallest index in the path. However, it has two neighbors in the path of larger index, which must be adjacent to each other. Therefore, there is a shorter path. This is a contradiction.
\end{proof}

Now, given the chordal graph $G_K$ induced by $K$ and the estimates of principal minors of size at most three, we provide an algorithm to determine the signs of the edges of $G_K$, or, equivalently, the off-diagonal entries of $K$.

\begin{algorithm}[tb]
   \caption{Compute Signs of Edges in Chordal Graph}
   \label{alg:chordal}
\begin{algorithmic}
   \STATE {\bfseries Input:}  $G_K = ([N],E)$ chordal, $\hat \Delta_{S}$ for $|S|\le 3$.
   \vspace{0.5pc}
      \STATE Compute a PEO $\{v_1,...,v_N\}$.
     \STATE Compute the spanning forest $G' = ([N], E')$.
     \STATE Set all edges in $E'$ to have positive sign.
     \STATE Compute $\hat C_{\{i,j,i^*\}}$ for all $\{i,j \} \in E \setminus E'$, $j<i$.
     \STATE Order edges $E \setminus E'  = \{e_{1} ,..., e_{\nu} \}$ such that $i > j$ if $\max e_{i} < \max e_{j}$.
     \STATE Visit edges in sorted order and for $e =\{i,j\}$, $j > i$, set $$\sgn(\{i,j\}) = \sgn(\hat C_{\{i,j,i^*\}}) \sgn(\{i,i^*\}) \sgn(\{j,i^*\})$$
     \end{algorithmic}
\end{algorithm}

\begin{theorem}
Algorithm \ref{alg:chordal} determines the signs of the edges of $G_K$ in $O(m)$ time.
\end{theorem}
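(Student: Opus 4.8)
The plan is to establish two things: that Algorithm~\ref{alg:chordal} outputs a sign assignment consistent, up to a $\cD_N$-similarity, with the true off-diagonal signs of $K$, and that it runs in $O(m)$ time. For correctness I would first justify the gauge choice. By the preceding lemma, $G' = ([N], E')$ is a spanning forest of $G_K$, hence acyclic; since $K$ is identifiable only up to a $\cD_N$-similarity, and fixing the signs along an acyclic edge set imposes no constraint on the cycle space, I may set $\sgn(\{v_i, v_{i^*}\}) = +$ for every tree edge without loss of generality. It then remains to recover the signs of the $\nu = m - N + \kappa(G_K)$ non-tree edges in $E \setminus E'$. Since $G_K$ is chordal its cycle sparsity is $3$, so by the Proposition the principal minors of size at most $3$ (the algorithm's input) suffice to pin down $K$ up to $\cD_N$-similarity.

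The key structural fact, which I would prove first, is that each non-tree edge $\{v_i, v_j\}$ with $i < j$ closes a triangle with the vertex $v_{i^*}$. Indeed $i^*$ is by definition the smallest larger-indexed neighbor of $v_i$, and since $\{v_i, v_j\} \in E$ with $j > i$, both $v_{i^*}$ and $v_j$ lie among the larger-indexed neighbors of $v_i$; by the PEO clique property these form a clique, so $\{v_{i^*}, v_j\} \in E$ and $\{v_i, v_j, v_{i^*}\}$ is an induced $3$-cycle. Moreover $i^* < j$, since otherwise $\{v_i,v_j\}$ would itself be the tree edge. Applying \eqref{matchings} with $S = \{i, j, i^*\}$, the minor $\Delta_S$ contains the term $2(-1)^{|S|+1}\prod_{\{a,b\} \in E(S)} K_{a,b}$, whose sign is exactly the product of the three edge signs around the triangle; hence $\sgn(\hat C_{\{i,j,i^*\}})$ recovers this product, and the update $\sgn(\{i,j\}) = \sgn(\hat C_{\{i,j,i^*\}})\sgn(\{i,i^*\})\sgn(\{j,i^*\})$ determines the unknown sign \emph{provided the other two are already known}. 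As these fundamental triangles span the cycle space, recovering all their signs fixes every cycle sign, and the Proposition together with Theorem~3.12 of \cite{Rising2015126} then yields $K$ up to $\cD_N$-similarity.

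The main obstacle is verifying that the processing order makes both companion signs available when $\{v_i, v_j\}$ is visited. The edge $\{v_i, v_{i^*}\}$ is a tree edge, so its sign is fixed at the start. The edge $\{v_{i^*}, v_j\}$ is the delicate one: I would show it is either (a) a tree edge, or (b) a non-tree edge with the same maximal endpoint $j$ but strictly larger minimal endpoint $i^* > i$. Consequently, ordering $E \setminus E'$ by increasing maximal endpoint and breaking ties by decreasing minimal endpoint gives a well-founded order, in which every dependency points either to a fixed tree edge or to a non-tree edge strictly earlier in the order. This guarantees no circular dependency and that each sign is set exactly once; the reduction to a strictly larger minimal endpoint within a fixed maximal endpoint is the crux of the argument.

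Finally, for the runtime I would go line by line. A PEO is obtained by lexicographic breadth-first search in $O(m)$ time \cite{rose1976algorithmic}, and $G'$ is read off from the PEO in $O(m)$. Each $\hat C_{\{i,j,i^*\}}$ is a $3\times 3$ determinant computed in $O(1)$, and there are $\nu = O(m)$ of them. The sorting keys lie in $[N]$, so the required order is realized by two passes of counting (bucket) sort in $O(N + m) = O(m)$ time, avoiding a comparison sort. The concluding pass assigns one sign per non-tree edge in $O(1)$ each. Summing these contributions gives the claimed $O(m)$ bound.
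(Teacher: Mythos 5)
Your proof is correct and follows the same route as the paper: fix the tree-edge signs arbitrarily since $E'$ is acyclic, recover each non-tree edge $\{v_i,v_j\}$ from the fundamental triangle $\{v_i,v_{i^*},v_j\}$ guaranteed by the PEO clique property, and count operations line by line. You actually supply two details the paper leaves implicit --- the tie-breaking rule (decreasing minimal endpoint within a fixed maximal endpoint) needed to make the dependency order well-founded, and the use of counting sort to keep the ordering step at $O(m)$ --- both of which are genuine and welcome refinements rather than a different approach.
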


\begin{proof}
We will simultaneously perform a count of the operations and a proof of the correctness of the algorithm. Computing a PEO requires $O(m)$ operations. Computing the spanning forest requires $O(m)$ operations. The edges of the spanning tree can be given arbitrary sign, because it is a cycle-free graph. Computing each $\hat C_{\{i,j,i^*\}}$ requires a constant number of operations because $\ell = 3$, requiring a total of $O(m-N)$ operations. Ordering the edges requires $O(m)$ operations. Setting the signs of each remaining edge requires $O(m)$ operations. For each three cycle used, the other two edges have already had their sign determined, by construction. 
\end{proof}

Therefore, for the case in which $G_K$ is chordal, the overall complexity required by our algorithm to compute $\hat K$ is reduced to $O(m) + O(n N^2)$.

\vspace{3mm}

\section{Experiments}

Here we present experiments to supplement the theoretical results of the paper. We test our algorithm for two random matrices. First, we consider the matrix $K \in \R^{N \times N}$ corresponding to the cycle on $N$ vertices, $$K = \frac{1}{2} I + \frac{1}{4} A,$$
where $A$ is symmetric, and has non-zero entries only on the edges of the cycle, either $+1$ or $-1$, each with probability $1/2$. By the Gershgorin circle theorem, $0 \preceq K \preceq I$. Next, we consider the matrix $K \in \R^{N \times N}$ corresponding to the clique on $N$ vertices,
$$K = \frac{1}{2} I + \frac{1}{4 \sqrt{N}} A,$$
where $A$ is symmetric, and has all entries either $+1$ or $-1$, each with probability $1/2$. It is well known that $- 2 \sqrt{N} \preceq  A \preceq 2 \sqrt{N}$ with high probability, implying $0\preceq K \preceq I$.

For both cases, and a range of values of matrix dimension $N$ and samples $n$, we randomly generate $50$ instances of $K$ and run our algorithm on each instance. We record the proportion of the time that we recover the  graph induced by $K$, and the proportion of the time we recover the  graph induced by $K$ and correctly determine the signs of the entries.

In Figure \ref{fig1}, the shade of each box represents the proportion of trials that were recovered successfully for a given pair $N,n$. A completely white box corresponds to zero success rate, black to a perfect success rate.

%
 
\begin{figure}
\begin{center}
\subfloat[graph recovery, cycle]{\includegraphics[width=.5\textwidth]{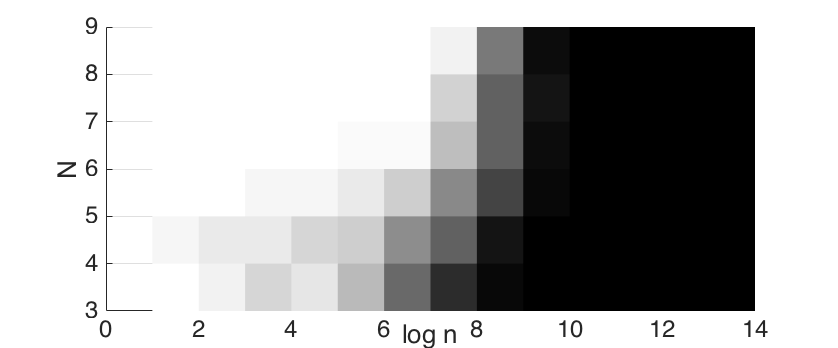}} 
\subfloat[graph and sign recovery, cycle]{\includegraphics[width=.5\textwidth]{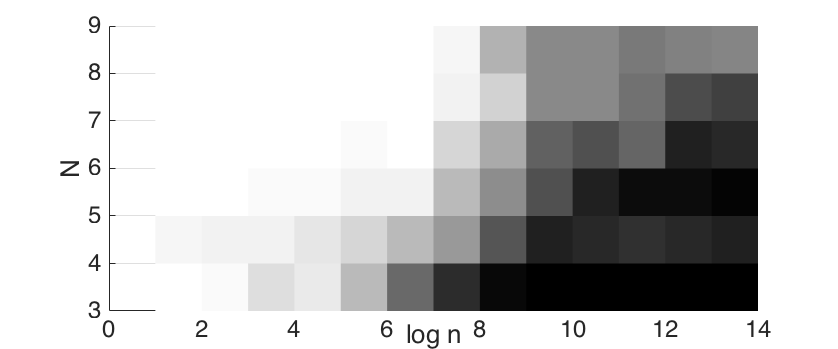}} \\
\subfloat[graph recovery, clique]{\includegraphics[width=.5\textwidth]{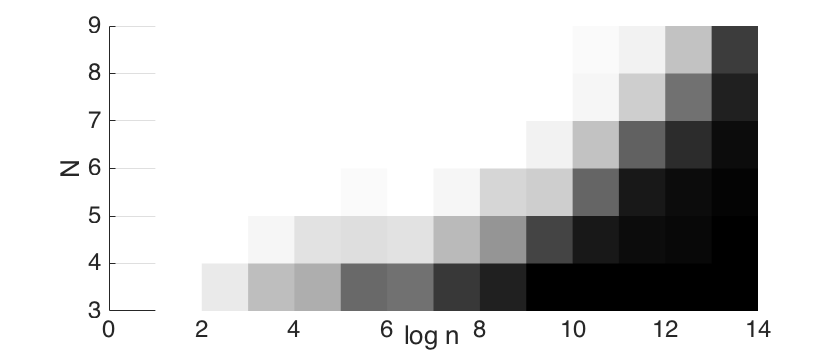}} 
\subfloat[graph and sign recovery, clique]{\includegraphics[width=.5\textwidth]{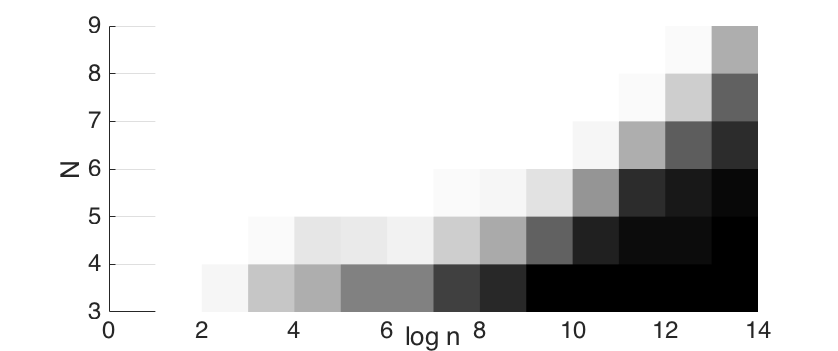}}
\caption{Plots of the proportion of successive graph recovery, and graph and sign recovery, for random matrices with cycle and clique graph structure, respectively. The darker the box, the higher the proportion of trials that were recovered successfully.}
\label{fig1}
\end{center}
\end{figure}   
%

The plots corresponding to the cycle and the clique are telling. We note that, conditional on successful recovery of the sparsity pattern of $K$, recovery of the signs of the off-diagonal entries of the full matrix quickly follows. However, for the cycle, there exists a noticeable gap between the number of samples required for recovery of the sparsity pattern and the number of samples required to recover the signs of the off-diagonal entries. This confirms through practice what we have already gleaned theoretically.

\section{Conclusion and open questions}

 In this paper, we gave the first provable guarantees for learning the parameters of a DPP. Our upper and lower bounds reveal the key role played by the parameter $\ell$, which is the cycle sparsity of graph induced by the kernel of the DPP. Our estimator does not need to know $\ell$ beforehand, but can adapt to the instance. Moreover, our procedure outputs an estimate of $\ell$ which could potentially be used for further inference questions such as testing and confidence intervals. An interesting open question is whether on a graph by graph basis, the parameter $\ell$ exactly determines the optimal sample complexity. Moreover when the number of samples is too small, can we exactly characterize which signs can be learned correctly and which cannot (up to a similarity transformation by $D$)? Such results would lend new theoretical insights into the output of algorithms for learning DPPs, and which individual parameters in the estimate we can be confident about and which we cannot be. 


\vspace{3mm}

\bibliography{DPPMOM}
\bibliographystyle{alpha}

\end{document}